\newcommand{\dataversione}{\today}
\numberwithin{equation}{section}
\newtheoremstyle{mytheorem}
{}
{}
{\it}
{\parindent}
{\bf}
{.}
{ }
{\thmnumber{#2.~}\thmname{#1}\thmnote{~\rm#3}}
\newtheoremstyle{myremark}
{}
{}
{\rm}
{\parindent}
{\bf}
{.}
{ }
{\thmnumber{#2.~}\thmname{#1}\thmnote{~\rm#3}}
\newtheoremstyle{myparagraph}
{}
{}
{\rm}
{\parindent}
{\bf}
{}
{ }
{\thmnumber{#2.~}\thmname{#1}\thmnote{#3}}
\theoremstyle{mytheorem}
\newtheorem{theorem}[subsection]{Theorem}
\newtheorem{corollary}[subsection]{Corollary}
\newtheorem{proposition}[subsection]{Proposition}
\newtheorem{definition}[subsection]{Definition}
\newtheorem{example}[subsection]{Example}
\theoremstyle{myremark}
\newtheorem{remark}[subsection]{Remark}
\newtheorem*{remark*}{Remark}
\theoremstyle{myparagraph}
\newtheorem*{parag*}{}
\def\@secnumfont{\sc}
\def\section{\@startsection{section}{1}%
\z@{1.5\linespacing\@plus .2\linespacing}{.7\linespacing}%
{\normalfont\sc\centering}}
\def\ps@headings{\ps@empty
 \def\@evenhead{%
  \setTrue{runhead}%
  \normalfont\footnotesize
  \rlap{\thepage}\hfil
  \def\thanks{\protect\thanks@warning}%
  \leftmark{}{}\hfil}%
 \def\@oddhead{%
  \setTrue{runhead}%
  \normalfont\footnotesize\hfil
  \def\thanks{\protect\thanks@warning}%
  \rightmark{}{}\hfil \llap{\thepage}}%
\let\@mkboth\markboth}
\renewenvironment{proof}[1][\proofname]{\par
  \pushQED{\qed}%
  \normalfont \topsep6\p@\@plus6\p@\relax
  \trivlist
  \itemindent\normalparindent
  \item[\hskip\labelsep
    \scshape
    #1\@addpunct{.}]\ignorespaces
}{%
  \popQED\endtrivlist\@endpefalse
}
\providecommand{\proofname}{Proof}
\newcommand{\R}{\mathbb{R}}
\newcommand\res{\mathop{\hbox{\vrule height 7pt width .5pt depth 0pt
\vrule height .5pt width 6pt depth 0pt}}\nolimits}
\newcommand{\cH}{{\mathcal{H}}}
\newcommand\Z{{\mathbb Z}}
\newcommand\N{{\mathbb N}}
\newcommand\Q{{\mathbb Q}}
\newcommand{\Haus}[1]{{\mathcal{H}}^{#1}} 
\begin{document}

	%
\pagestyle{empty}
\pagestyle{myheadings}
\markboth%
{\underline{\centerline{\hfill\footnotesize%
\textsc{A. Marchese \& A. Massaccesi}%
\vphantom{,}\hfill}}}%
{\underline{\centerline{\hfill\footnotesize%
\textsc{An optimal irrigation network}%
\vphantom{,}\hfill}}}

	%
\thispagestyle{empty}

~\vskip -1.1 cm

	%
\centerline{\footnotesize version: \dataversione%
\hfill
}

\vspace{1.7 cm}

	%
{\Large\sl\centering
An optimal irrigation network\\
with infinitely many branching points
\\
}

\vspace{.4 cm}

	%
\centerline{\sc Andrea Marchese \& Annalisa Massaccesi}

\vspace{.8 cm}

{\rightskip 1 cm
\leftskip 1 cm
\parindent 0 pt
\footnotesize

	%
{\sc Abstract.}
The Gilbert-Steiner problem is a mass transportation problem, where the cost of the transportation depends on the network used to move the mass and it is proportional to a certain power of the ``flow''. In this paper, we introduce a new formulation of the problem, which turns it into the minimization of a convex functional in a class of currents with coefficients in a group. This framework allows us to define calibrations, which can be used to prove the optimality of concrete configurations. We apply this technique to prove the optimality of a certain irrigation network, having the topological property mentioned in the title.
\par
\medskip\noindent
{\sc Keywords:} Gilbert-Steiner problem, irrigation problem, calibrations, flat $G$-chains.
\par
\medskip\noindent
{\sc MSC (2010): 49Q15, 49Q20, 49N60, 53C38.}
\par
}

\section{Introduction}
%
%

The Gilbert-Steiner problem is a variant of the optimal transport problem with a cost depending not only on the position of the masses at the beginning and at the end of the process, but also on the path that has been chosen to move them. As in the optimal transport problem, the Gilbert-Steiner's datum consists of a positive measure $\mu_-$ (source) and a positive measure $\mu_+$ (well), with the same total mass $\|\mu_-\|(\R^d)=\|\mu_+\|(\R^d)$. One wants to move $\mu_-$ onto $\mu_+$, minimizing the global cost. The cost of the transport is concave with respect to the density of the transported mass, as it is in many natural phenomena and engineering problems\footnote{See \cite{West_Brown_En,Mau_Fi_Wei_Sa,Bran_But,Xia4}, for instance, for the models arising from nature and \cite{Gil,Deb,Bha_Salz} for the models arising from engineering and optimal design. Actually, the Gilbert-Steiner problem is motivated not only by concrete applications. Indeed, it originated from operational research and graph theory, as one can see in \cite{Zang}, and other very interesting mathematical problems can be related with it, such as the study of topological singularities  for Ginzburg-Landau models (see \cite{Be}).}. 

In particular, if $\mu_-$ and $\mu_+$ are sums of Dirac masses on a set of sources $X$ and a set of wells $Y$, respectively, then a transporting network is described by a $1$-dimensional rectifiable set $\Sigma$ and a multiplicity function $\theta\in L^1(\Sigma;\Z)$. The set $\Sigma$ is a union of paths joining points of $X$ to points of $Y$ and the multiplicity represents the ``flow'' of the transported mass through each point of the set $\Sigma$. The cost associated to each transporting network is the integral on the set $\Sigma$ of $\theta^\alpha$ for some $\alpha\in(0,1)$. It is interesting to observe the limit cases: if $\alpha=0$, the problem corresponds to the minimization of the $1$-dimensional measure of the set $\Sigma$, if $\alpha=1$, we recover the Monge-Kantorovich energy and the Monge problem of optimal transport. When $\mu_-$ is supported on a single point, we talk about irrigation problem. One immediately understands that, by its own nature, the problem tends to favor large flows and therefore it easily leads to branched structures. 

Several descriptions of this problem have been given by many authors: we refer to \cite{Ber_Ca_Mo1} for a detailed overview of the subject, though different models have been proposed in \cite{Xia1,Mad_Mo_So,Ber_Ca_Mo2,Ste,Pa_Ste3}. Those models are summarized and proved to be equivalent in \cite{Mad_So,Mad_So2}. Another model is available in \cite{Bran_But_San}, with further developments and a comparison with the model proposed in \cite{Ber_Ca_Mo2} in \cite{Bra_San}.

Our starting point is Xia's approach in \cite{Xia1}: the problem is described in terms of minimization of a certain energy, depending on $\alpha$, in a family of transport paths, that is, normal $1$-dimensional currents with boundary $\mu_+-\mu_-$. An important feature which is evident in
this approach is the non-convex nature of the problem: the energy of a convex combination of two competitors may be strictly larger than the energy of each single competitor. 

One of the achievements of the present paper is a new
formulation of the Gilbert-Steiner problem which turns it into the minimization of a convex functional. By associating to the measures $\mu_-$ and $\mu_+$ a normed group $G$ and a 0-dimensional current $B$ with coefficients in $G$, we
manage to prove that the Gilbert-Steiner problem is equivalent to the problem of finding a mass minimizer, among all 1-dimensional currents $Z$ with coefficients in $G$ having boundary $\partial Z = B$. The group $G$ is chosen depending only on the quantity $\|\mu_-\|(\R^d)=\|\mu_+\|(\R^d)$. In this new framework we can
introduce the notion of calibration, that is, a functional analytic tool to prove that a given candidate is a minimizer.

Such an approach is not completely new: in \cite{AnMa}, we applied the same procedure to the study of the
Steiner tree problem, which corresponds to the limit case $\alpha=0$ of the Gilbert-Steiner problem, with an additional connectedness constraint. 

Let us explain with an easy example the rough idea of the new formulation. Assume we want to irrigate the two wells $y_1=(2,-1)$ and $y_2=(2,1)$ from a (unique) source at the origin, which generates a flow of intensity 2. Fix $\alpha=1/2$. The classical description of a competitor for this problem consists of a superposition of two paths $\Gamma_1$ and $\Gamma_2$, both with multiplicity 1: the first path goes from the origin to $y_1$, the second from the origin to $y_2$. 
\begin{figure}[htbp]
\begin{center}
\scalebox{1}{
\input{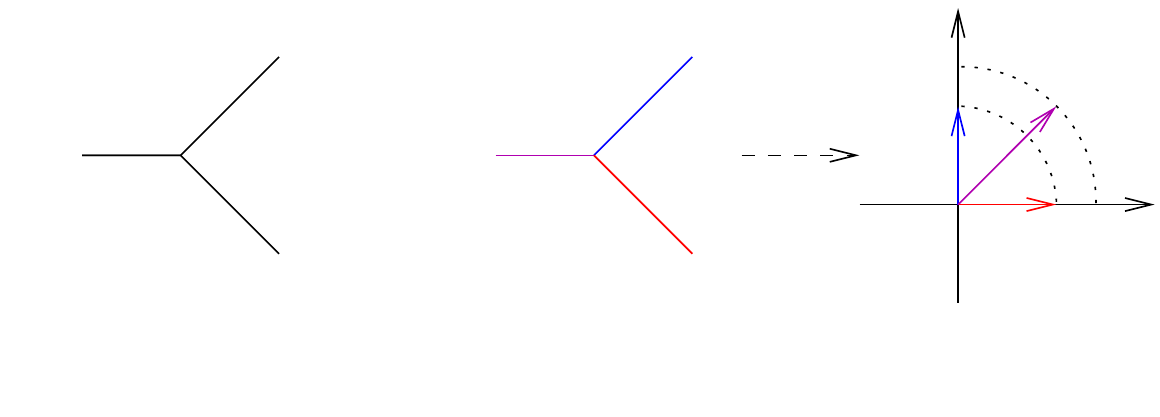_t}
}
\end{center}
\end{figure}
Therefore the irrigation network can be identified with the set $\Gamma_1\cup\Gamma_2$ with a multiplicity, which is equal to 2 on $\Gamma_1\cap\Gamma_2$ and 1 elsewhere on $\Gamma_1\cup\Gamma_2$. One computes the energy of such competitor as 
$$\Haus{1}((\Gamma_1\cup\Gamma_2)\setminus(\Gamma_1\cap\Gamma_2))+\sqrt{2}\Haus{1}(\Gamma_1\cap\Gamma_2),$$
where $\Haus{1}$ denotes the 1-dimensional Hausdorff measure. 
In our formulation we associate to $\Gamma_1$ and $\Gamma_2$ vectors in $\R^2$ as multiplicities: $g_1$ and $g_2$, respectively, where $(g_1,g_2)$ is an orthonormal basis of $\R^2$. In this way the stretch ${\Gamma_1\cap\Gamma_2}$ is associated to the vector $g_1+g_2$. In order to compute the mass of the corresponding 1-current with coefficients in $\R^2$ one has to integrate on ${\Gamma_1\cup\Gamma_2}$ the Euclidean norm of the corresponding multiplicity. Clearly this mass coincides with the energy computed as above, because the Euclidean norm of $g_1+g_2$ is exactly $\sqrt{2}$.

In the last part of the paper we apply the equivalence result and the calibration method to prove the optimality of an irrigation tree with the topological property mentioned in the title of the paper, that is, an infinite number of branching points. The occurrence of such a topological behaviour for energy-minimizers has been the aim of various attempts: in \cite{Ber_Ca_Mo3} some characterizing principles for infinite irrigation trees are discussed, while in \cite{Pa_Ste_Tep} the authors prove the optimality of an analogous tree for the generalized Steiner tree problem (see also \cite{Pa_Ste2} and \cite{Pa_Uli}). Their proof relies on a
fine and rather delicate study of the geometry of the candidate minimizer. On the contrary, our proof is based on a calibration argument and we believe its simplicity to be an encouraging sign for further developments.


Let us describe now, in more detail, the content of each section.

In \S\ref{sec:not}, we collect those definitions and basic results in Geometric Measure Theory that are necessary to formulate both the discrete version of the Gilbert-Steiner problem, as introduced in \cite{Xia1} and our new
formulation in terms of 1-dimensional currents with coefficients in a group. Moreover we define a suitable notion of calibration for these objects and we prove that a calibrated current minimizes the mass in its homology class.

In \S\ref{sec:GS}, 
we recall and rework the discrete version of the Gilbert-Steiner problem given by \cite{Xia1}, in terms of an energy minimization problem in the family of classical integral currents, having a fixed boundary, which is supported on the given set of sources and wells (this is what we call problem (P1)). 
We also recall a structure theorem for integral 1-currents (see Theorem \ref{thm:struct_1currents}) which allows one to better understand why such an abstract class of competitors is indeed a natural choice.

In \S\ref{sec:equiv}, we describe how to associate to the given measures $\mu_-$ and $\mu_+$ a normed group $G$ and a mass minimization problem in a class of rectifiable currents with coefficients
in $G$ (which we call problem (P2)). In Theorem \ref{equiva} we prove the equivalence between problems (P1) and (P2). Then, we apply this result and the calibration technique introduced in \S\ref{sec:not} to
prove the optimality of some simple transportation networks. We conclude explaining how one should modify the two problems (P1) and (P2) in order to keep the equivalence, when the measures $\mu_-,\mu_+$, given as initial datum, are rescaled.

In \S\ref{sec:tree}, we describe an irrigation network on the separable Hilbert space $\ell^2$. Then, we prove that such a network is a solution of the continuous version of the Gilbert-Steiner problem associated to the corresponding boundary. The proof
is based on an argument of reduction to a finite dimensional framework, where we are able to exhibit a calibration and therefore to prove the optimality.

%
%
\section*{Acknowledgements}
We acknowledge Michael Goldman for inspiring discussions and helpful comments.

\section{Notation and Preliminaries}\label{sec:not}

In this first section, we briefly recall the main definitions and results concerning $1$-dimensional rectifiable currents with coefficients in the abelian normed group $\left(\Z^n,\|\cdot\|_\alpha\right)$. In
this
paper we limit ourselves to the essential facts which are unavoidable for the statement and the development of the main result. See \cite{AnMa} for a more general theory and further details.

\subsection{Differential forms with values in $\R^n$}
The ambient space where we want to treat the irrigation problem is $\R^d$. Concerning the coefficients, let us fix a standard basis $(g_1,\ldots,g_n)$ for $\R^n$. We fix $\alpha\in(0,1)$ and consider
the $n$-dimensional normed vector space
$(\R^n,\|\cdot\|_\alpha)$, with \[\|h\|_\alpha:=\left(\sum_{j=1}^n |h_j|^{\frac{1}{\alpha}}\right)^\alpha\] for every $h=h_1g_1+\ldots h_ng_n\in\R^n$. Once for all, we fix the (canonical) basis
 $(e_1,\ldots,e_n)$ of the dual space of $(\R^n,\|\cdot\|_\alpha)$ so that $\langle e_i,g_j\rangle=\delta_{ij}$.
With this notation, the dual space of
$(\R^n,\|\cdot\|_\alpha)$ is the $n$-dimensional normed vector space $(\R^n,\|\cdot\|_{1-\alpha})$.
%
\begin{definition}{\rm
An $\R^n$-\emph{valued covector} in $\R^d$ is a map $\omega:\Lambda_1(\R^d)\times\R^n\to\R$ with the following properties\footnote{In the following, we write $\langle\omega;\tau,h\rangle$ for $\omega(\tau,h)$.}:
\begin{itemize}
\item[(i)] $\langle\omega;\tau,\cdot\rangle\in \R^n$ for every vector $\tau\in\Lambda_1(\R^d)$;
\item[(ii)] $\langle\omega;\cdot,h\rangle\in\Lambda^1(\R^d)$ is a standard $1$-dimensional covector in $\R^d$ for every element $h\in\R^n$.
\end{itemize}
The linear space of $\R^n$-valued covectors in $\R^d$, endowed with the comass norm $$\|\omega\|^*_{\alpha}:=\sup_{|\tau|\le 1}\|\langle\omega;\tau,\cdot\rangle\|_{1-\alpha},$$ will be denoted by
$\Lambda^1_{(n,\alpha)}(\R^d)$. An $\R^n$-valued ($1$-dimensional) differential form defined on $\R^d$ is a map $\omega:\R^d\to\Lambda^1_{(n,\alpha)}(\R^d)$, its regularity
is the one inherited from the components
\[
\omega_j:=\langle\omega;\cdot,g_j\rangle:\R^d\to\Lambda^1(\R^d)\,,
\]
for $j=1,\ldots,n$.
}\end{definition}
\begin{definition}\label{def:de}{\rm
Consider a function $\psi:\R^d\to\R^n$ of class $\mathscr C^1$. We can compute the differential ${\rm d}\psi_j$ of each component $\psi_j$ of
$\psi$ ($j=1\,\ldots,n$), thus we will denote \[{\rm d}\psi={\rm d}\psi_1e_1+\ldots+{\rm d}\psi_ne_n\in\mathscr C(\R^d;\Lambda^1_{(n,\alpha)}(\R^d))\,.\]
}\end{definition}

\subsection{Rectifiable currents with coefficients in $\R^n$}

\begin{definition}{\rm
Given an integer number $n\ge 1$ and a real parameter $\alpha\in(0,1)$, we say that a map $Z:\mathscr C^\infty_c(\R^d;\Lambda^1_{(n,\alpha)}(\R^d))\to\R$ is a \emph{rectifiable current
with coefficients in the normed abelian group} $(\Z^n,\|\cdot\|_\alpha)$ if there exist a countably $1$-rectifiable set $\Sigma\subset \R^d$ (see \cite{Si}, for instance, for details about rectifiable sets), a vector field $\tau:\Sigma\to\Lambda_1(\R^d)$ which is almost
everywhere tangent to $\Sigma$ and an $L^1$ function $\theta:\Sigma\to(\Z^n,\|\cdot\|_\alpha)$ such that
\begin{itemize}
\item[(i)] for every $\omega\in\mathscr C^\infty_c(\R^d;\Lambda^1_{(n,\alpha)}(\R^d))$ we can write
\begin{equation}\label{def_curr}
Z(\omega)=\int_\Sigma \langle\omega(x);\tau(x),\theta(x)\rangle\,d\Haus{1}(x)\,;
\end{equation}
\item[(ii)] the \emph{mass} of the current is finite, that is
\[
\mathds M^{(n,\alpha)}(Z):=\int_\Sigma \|\theta(x)\|_\alpha\,d\Haus{1}(x)<+\infty\,.
\]
\end{itemize}
Moreover, we define the \emph{boundary} of $Z$ as the linear functional $\partial Z:\mathscr C^\infty_c(\R^d;\R^n)\to \R$ satisfying $\partial Z(\psi):=Z({\rm d}\psi)$ for every $\psi\in C^\infty_c(\R^d;\R^n)$. We require that
there exist a finite set of points $X=\{x_1,\ldots,x_m\}\subset\R^d$ and a function $\eta:X\to (\Z^n,\|\cdot\|_\alpha)$ such that
\begin{itemize}
\item[(iii)] for every $\psi\in C^\infty_c(\R^d;\R^n)$ we can write
\begin{equation}\label{def_bord}
\partial Z(\psi)=\sum_{k=1}^m \psi(x_k)\eta(x_k)\,.
\end{equation}
\end{itemize}
Under these conditions, we will often denote $Z=\llbracket\Sigma,\tau,\theta\rrbracket$, $\partial Z=\sum_{k=1}^m\eta(x_k)\delta_{x_k}$ and $\mathds M^{(n,\alpha)}(\partial Z):=\sum_{k=1}^m\|\eta(x_k)\|_\alpha$.  Moreover, we
write $Z=(Z_1,\ldots,Z_n)$, and we say
that $Z_j$ are the \emph{components} of $Z$, if $Z_j=\llbracket\Sigma_j,\tau_j,\theta_j\rrbracket$
$(j=1,\ldots,n)$ are classical\footnote{Throughout the paper we refer to ``classical'' currents to indicate currents with coefficients in $\R$ or $\Z$ (that is, $n=1$). An \emph{integral} 1-current is a classical rectifiable current with integer coefficients. We refer to \cite{Fe1}, \cite{Si} or \cite{Krantz_Parks} for the classical theory of currents.} integral currents satisfying
\[
Z(\omega)=\sum_{j=1}^n\int_{\Sigma_j} \langle\omega_j(x);\tau_j(x)\rangle\theta_j(x)\,d\Haus{1}(x)\,,
\]
for every $\omega\in\mathscr C^\infty_c(\R^d;\Lambda^1_{(n,\alpha)}(\R^d))$.

The appropriate notion of weak*-topology for the set of rectifiable currents with coefficients in $(\Z^n,\|\cdot\|_\alpha)$ is dual to the suitable locally convex topology on the space
$\mathscr C^\infty_c(\R^d;\Lambda^1_{(n,\alpha)}(\R^d))$, built in analogy with the topology on
$\mathscr C^\infty_c(\R^d)$ with respect to which distributions are dual. We write $Z_{i}\stackrel{*}{\rightharpoonup} Z$ when the sequence $(Z_i)_{i\ge 1}$ is weakly*-converging to $Z$.
}\end{definition}
Let us remark that these definitions are consistent with the classical theory of currents and currents with coefficients in a (normed, abelian, discrete) group. In particular, rectifiable currents with coefficients in $(\Z^n,\|\cdot\|_\alpha)$ belong to the larger linear space of currents with coefficients in $(\R^n,\|\cdot\|_\alpha)$ and the mass
of a current $Z$ is the supremum of $|Z(\omega)|$ when $\omega\in\mathscr C^\infty_c(U;\Lambda^1_{(n,\alpha)}(\R^d))$ has norm 
$$\sup_{x\in U}\|\omega(x)\|^*_\alpha\le 1.$$

Despite the complexity of the notation, this framework is not redundant for the objects we would like to represent and analyze. Let us clarify, firstly, how we endow a curve $\gamma$ with a canonical structure of current in the following example.

\begin{example}{\rm We associate to a Lipschitz path $\gamma:[0,1]\to\R^2$ (pa\-ra\-me\-tri\-zed with constant speed), and a coefficient
$\theta\in\R^n$, the 1-dimensional rectifiable current $Z=\llbracket\Gamma,\tau,\theta\rrbracket$ with coefficients in $\R^n$, where $\Gamma$ is the support of the curve $\gamma([0,1])$ and, denoting by  $\ell(\Gamma)$
the length of the curve, the orientation $\tau$ is defined by $\tau(\gamma(t)):=\gamma'(t)/\ell(\Gamma)$ for a.e. $t\in[0,1]$. The boundary of such current
is $\partial T=\theta(\delta_{\gamma(1)}-\delta_{\gamma(0)})$.}\end{example}

The next theorem provides a reasonable structure for a rectifiable
current with coefficients in $(\Z^n,\|\cdot\|_\alpha)$ as a countable sum of loops plus finitely many Lipschitz curves in $\R^d$ with constant multiplicity.

\begin{theorem}\label{thm:stru}
Let $Z$ be a rectifiable current in $\R^d$ with coefficients in ${(\Z^n,\|\cdot\|_\alpha)}$, then
\[
Z=\sum_{i=1}^m \tilde Z_i+\sum_{\ell=1}^\infty \mathring{Z}_\ell.
\]
Here $\tilde Z_i=\llbracket\tilde\Gamma_i,\tilde\tau_i,\tilde\theta_i\rrbracket$ are rectifiable currents with coefficients in $(\Z^n,\|\cdot\|_\alpha)$, $\tilde\Gamma_i$ being the support of an injective
Lipschitz
curve and $\tilde\theta_i$ a constant on $\tilde\Gamma_i$. Similarly $\mathring{Z}_\ell=\llbracket\mathring\Gamma_\ell,\mathring\tau_\ell,\mathring\theta_\ell\rrbracket$ are rectifiable currents with
coefficients in
$(\Z^n,\|\cdot\|_\alpha)$, $\mathring\Gamma_\ell$ being the support of a Lipschitz closed curve $\gamma_\ell:[0,1]\to\R^d$, which is injective on $(0,1)$. Again, $\mathring\theta_i$ is constant on $\mathring\Gamma_i$.
\end{theorem}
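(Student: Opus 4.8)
The plan is to reduce the statement to the classical structure theorem for integral $1$-currents (Theorem \ref{thm:struct_1currents}), applied separately to the components of $Z$. First I would produce those components explicitly: writing $\theta_j:=\langle e_j,\theta\rangle$ for the $j$-th coordinate of the multiplicity, the currents $Z_j:=\llbracket\Sigma,\tau,\theta_j\rrbracket$ are classical rectifiable $1$-currents with integer multiplicity, and by construction $Z=(Z_1,\dots,Z_n)$ in the sense of the definition. Their boundaries $\partial Z_j=\sum_{k=1}^m\langle e_j,\eta(x_k)\rangle\,\delta_{x_k}$ are finite sums of Dirac masses with integer coefficients, hence finitely supported of finite mass. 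Moreover, since $\alpha\in(0,1)$ one has $|h_j|\le\|h\|_\alpha$ for every $h\in\R^n$ (because $\sum_k|h_k|^{1/\alpha}\ge|h_j|^{1/\alpha}$ and $t\mapsto t^\alpha$ is increasing); integrating this with $h=\theta(x)$ gives $\mass(Z_j)=\int_\Sigma|\theta_j|\,d\Haus{1}\le\mathds M^{(n,\alpha)}(Z)<+\infty$. Thus each $Z_j$ is an integral $1$-current of finite mass with finitely supported boundary.

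Next I would apply Theorem \ref{thm:struct_1currents} to each $Z_j$ in turn. This decomposes $Z_j$ into a sum of currents carried by simple Lipschitz curves, with both mass and boundary mass additive along the decomposition. The pieces are of two kinds: injective arcs, whose two distinct endpoints both feed $\partial Z_j$, and closed curves, which carry no boundary. Since boundary mass is additive and $\mass(\partial Z_j)=\sum_k|\langle e_j,\eta(x_k)\rangle|<+\infty$, while each arc contributes at least $2$ to the boundary mass, only finitely many arcs can occur in the decomposition of $Z_j$; all the remaining, countably many, pieces are loops. Viewed as currents with coefficients in $(\Z^n,\|\cdot\|_\alpha)$, each such piece has multiplicity equal to a constant integer times the basis vector $g_j$, hence a constant element of $\Z^n$ — so the group structure does nothing more than record that a multiple of a single basis vector is an admissible constant coefficient.

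Finally I would reassemble. Summing the decompositions over $j=1,\dots,n$ and relabelling, the arcs coming from all the components form a \emph{finite} family $\tilde Z_1,\dots,\tilde Z_m$, each carried by an injective Lipschitz curve with constant $\Z^n$-valued multiplicity, while the loops form a \emph{countable} family $\mathring Z_1,\mathring Z_2,\dots$, each carried by a closed Lipschitz curve injective on $(0,1)$, again with constant multiplicity (pieces sharing the same support may be merged, keeping the coefficient constant). Convergence of the countable series is guaranteed by the mass additivity of Theorem \ref{thm:struct_1currents}, since the total classical mass of all the pieces is controlled by $\sum_{j=1}^n\mass(Z_j)<+\infty$. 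This yields exactly the asserted decomposition $Z=\sum_{i=1}^m\tilde Z_i+\sum_{\ell=1}^\infty\mathring Z_\ell$.

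The only genuinely delicate points are the reduction to components and the arc count. One must verify that passing from the group mass $\mathds M^{(n,\alpha)}$ to the classical masses $\mass(Z_j)$ loses no finiteness, which is exactly the inequality $|h_j|\le\|h\|_\alpha$, and that the additivity of boundary mass in the classical theorem forces only finitely many non-closed pieces. Everything else is bookkeeping, and I expect no serious obstacle beyond carefully invoking the classical decomposition in its non-cancellation (mass- and boundary-mass-additive) form.
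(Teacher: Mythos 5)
Your argument is correct for the statement as written, but it follows a genuinely different route from the one the paper points to: the paper gives no proof of its own and instead cites 2.3 of \cite{Con_Gar_Mas}, where Federer's decomposition argument (4.2.25 of \cite{Fe1}) is reworked \emph{directly} for currents with group coefficients, whereas you reduce to the scalar case componentwise. Your reduction is legitimate here: since the boundary is by definition a finite sum of Dirac masses and $|h_j|\le\|h\|_\alpha$ for every $h\in\R^n$, each component $Z_j$ is an integral $1$-current of finite mass and finite boundary mass, so Theorem \ref{thm:struct_1currents} applies, the number of arcs per component is finite (it equals half the boundary mass of $Z_j$), and summing over the finitely many $j$ gives the claimed form, each piece carrying a constant multiple of a single $g_j$. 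What this buys is elementarity — no need to revisit Federer's argument — at two costs worth naming. First, the method is tied to the product structure of $\Z^n$ and would not extend to a general normed abelian group $G$, which is the setting the cited reference handles. Second, your decomposition is additive for the classical masses $\mathds{M}(Z_j)$ but \emph{not} for $\mathds M^{(n,\alpha)}$: where two components overlap, $\|g_i\|_\alpha+\|g_j\|_\alpha=2>2^\alpha=\|g_i+g_j\|_\alpha$, so the analogue of \eqref{spezza_massa} fails for your pieces. The statement as quoted does not assert such additivity, so this is not a gap in your proof of it, but it is precisely the extra content of the ``more detailed statement'' in \cite{Con_Gar_Mas} and the reason that proof does not argue componentwise; the optional ``merging'' step you mention in passing would be the (nontrivial, only partially possible) repair. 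Also note that boundary-mass additivity is not needed as a separate input: the finiteness of the non-closed pieces is already built into the statement of Theorem \ref{thm:struct_1currents}.
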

For the proof of this theorem and a more detailed statement see 2.3 in \cite{Con_Gar_Mas} (the proof rivisits the argument employed for classical integral currents in 4.2.25 of \cite{Fe1}).

\subsection{Compactness}
We recall the fundamental compactness result for rectifiable currents with coefficients in a group.

\begin{theorem}\label{thm:cptness}
Consider a sequence $(Z_i)_{i\ge 1}$ of rectifiable currents with coefficients in $(\Z^n,\|\cdot\|_\alpha)$ such that
\[
\sup_{i\ge 1}\mathds M^{(n,\alpha)}(Z_i)+\mathds M^{(n,\alpha)}(\partial Z_i)<+\infty\,.
\]
Then there exists a rectifiable current $Z$, with coefficients in $(\Z^n,\|\cdot\|_\alpha)$, and a subsequence $\left(Z_{i_h}\right)_{h\ge 1}$ such that
\[
Z_{i_h}\stackrel{*}{\rightharpoonup} Z\,.
\]
\end{theorem}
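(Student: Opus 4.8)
The plan is to reduce the statement to the classical Federer--Fleming compactness (closure) theorem for integral currents, applied componentwise. The basic point is that a rectifiable current $Z$ with coefficients in $(\Z^n,\|\cdot\|_\alpha)$ is entirely encoded by its $n$ components $Z_1,\dots,Z_n$, each a classical integral $1$-current, and that the group norm dominates each Euclidean component: since
\[
\|h\|_\alpha=\Big(\sum_{j=1}^n|h_j|^{1/\alpha}\Big)^{\alpha}\ge |h_k|\quad\text{for every }h\in\R^n\text{ and every }k,
\]
one gets the pointwise bound $|\theta_k|\le\|\theta\|_\alpha$, hence $\mathds M(Z_k)\le\mathds M^{(n,\alpha)}(Z)$, and in the same way $\mathds M(\partial Z_k)\le\mathds M^{(n,\alpha)}(\partial Z)$. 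Thus the hypothesis $\sup_i\big(\mathds M^{(n,\alpha)}(Z_i)+\mathds M^{(n,\alpha)}(\partial Z_i)\big)<+\infty$ yields, for each fixed $j$, a uniform bound on the classical mass and boundary mass of the $j$-th components $(Z_i)_j$.

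Next I would extract a common convergent subsequence. By the classical closure theorem for integral currents, the uniform bound on $\{(Z_i)_1\}_i$ gives a subsequence along which $(Z_i)_1$ converges weakly$^*$ to an integral $1$-current $Z_1$; passing to a further subsequence makes the second components converge, and after $n$ such extractions I obtain a single subsequence $(Z_{i_h})_h$ along which $(Z_{i_h})_j\stackrel{*}{\rightharpoonup}Z_j$ for every $j$, with each $Z_j$ a classical integral $1$-current. I then assemble the candidate $Z:=(Z_1,\dots,Z_n)$ and verify it is a rectifiable current with coefficients in $(\Z^n,\|\cdot\|_\alpha)$: writing $\Sigma_j$ for the rectifiable carrier of $Z_j$, the union $\Sigma:=\bigcup_{j=1}^n\Sigma_j$ is again $1$-rectifiable, on $\Sigma$ the multiplicity $\theta=(\theta_1,\dots,\theta_n)$ is $\Z^n$-valued because each $\theta_j\in\Z$, and $\mathds M^{(n,\alpha)}(Z)<+\infty$ follows from the equivalence of $\|\cdot\|_\alpha$ with the Euclidean norm together with the finiteness of each $\mathds M(Z_j)$. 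For the boundary, the identity $\partial Z(\psi)=\sum_{j=1}^n\partial Z_j(\psi_j)$ (with $\psi_j$ the scalar components of $\psi$) shows that $\partial Z$ is supported on the finite union of the supports of the $0$-dimensional integral currents $\partial Z_j$, each of which is a finite sum of Dirac masses with integer weights, so condition (iii) in the definition is met.

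Finally I would upgrade componentwise convergence to weak$^*$ convergence in the group setting. For any test form $\omega\in\mathscr C^\infty_c(\R^d;\Lambda^1_{(n,\alpha)}(\R^d))$ the scalar components $\omega_j=\langle\omega;\cdot,g_j\rangle$ lie in $\mathscr C^\infty_c(\R^d;\Lambda^1(\R^d))$, and by the componentwise description of the action one has
\[
Z_{i_h}(\omega)=\sum_{j=1}^n (Z_{i_h})_j(\omega_j)\longrightarrow\sum_{j=1}^n Z_j(\omega_j)=Z(\omega),
\]
which is exactly $Z_{i_h}\stackrel{*}{\rightharpoonup}Z$. The only substantive step is the passage from a purely distributional weak limit to an \emph{integral} limit in the extraction, but this is precisely the content of the classical closure theorem invoked above; the group-valued framework contributes nothing beyond the bookkeeping of finitely many components and the elementary norm comparison. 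I therefore expect the main (and only) conceptual difficulty to be isolated in that classical input, everything else being routine verification that the definitional requirements pass to the limit.
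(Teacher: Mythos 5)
Your argument is exactly the paper's: the authors prove this theorem componentwise by invoking the classical Federer--Fleming compactness/closure theorem for integral currents, which is precisely your reduction via the bound $|\theta_k|\le\|\theta\|_\alpha$ and a diagonal extraction over the $n$ components. The additional verifications you spell out (assembly of the limit, finiteness of the boundary support, upgrading componentwise convergence to weak$^*$ convergence of the group-valued current) are the routine details the paper leaves implicit, and they are carried out correctly.
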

The proof of this theorem can be carried out componentwise, using the a\-na\-lo\-gous celebrated result for classical integral currents (see \cite{Fe1} or \cite{Am_Kirch}). Since the mass is lower semicontinuous with respect to the same topology, we directly get the existence of a mass-minimizing rectifiable current for a given boundary.

\begin{corollary}\label{cor:dirmeth}
Let $\hat Z$ be a rectifiable current with coefficients in ${(\Z^n,\|\cdot\|_\alpha)}$, then there exists a rectifiable current $\overline Z$ such that
\[
\mathds M^{(n,\alpha)}(\overline Z)=\min_{\partial Z=\partial\hat Z}\mathds M^{(n,\alpha)}(Z)\,,
\]
where the minimum is computed among rectifiable currents with coefficients in $(\Z^n,\|\cdot\|_\alpha)$.
\end{corollary}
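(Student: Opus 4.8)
The plan is to argue by the direct method of the calculus of variations, feeding a minimizing sequence into the compactness theorem and then using lower semicontinuity of the mass. First I would set
$m := \inf\{\mathds M^{(n,\alpha)}(Z) : \partial Z = \partial \hat Z\}$, where the infimum ranges over rectifiable currents with coefficients in $(\Z^n,\|\cdot\|_\alpha)$. This competitor class is nonempty, since $\hat Z$ itself satisfies the boundary constraint, and the infimum is finite because $0 \le m \le \mathds M^{(n,\alpha)}(\hat Z) < +\infty$.

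Next I would select a minimizing sequence $(Z_i)_{i\ge 1}$ with $\partial Z_i = \partial \hat Z$ for all $i$ and $\mathds M^{(n,\alpha)}(Z_i) \to m$. A convergent sequence of reals is bounded, so $\sup_i \mathds M^{(n,\alpha)}(Z_i) < +\infty$; moreover $\mathds M^{(n,\alpha)}(\partial Z_i) = \mathds M^{(n,\alpha)}(\partial \hat Z)$ is constant in $i$. Hence the uniform bound $\sup_i [\mathds M^{(n,\alpha)}(Z_i) + \mathds M^{(n,\alpha)}(\partial Z_i)] < +\infty$ required by Theorem \ref{thm:cptness} holds, and applying that theorem produces a subsequence $(Z_{i_h})_{h\ge 1}$ and a rectifiable current $\overline Z$ with coefficients in $(\Z^n,\|\cdot\|_\alpha)$ with $Z_{i_h} \stackrel{*}{\rightharpoonup} \overline Z$.

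I would then invoke the lower semicontinuity of the mass under weak*-convergence (recorded immediately after Theorem \ref{thm:cptness}) to get $\mathds M^{(n,\alpha)}(\overline Z) \le \liminf_h \mathds M^{(n,\alpha)}(Z_{i_h}) = m$. To conclude that the infimum is attained, I must check that $\overline Z$ is admissible, namely $\partial \overline Z = \partial \hat Z$. This follows because the boundary is defined by $\partial Z(\psi) = Z({\rm d}\psi)$ and weak*-convergence is exactly testing currents against forms: passing to the limit in $Z_{i_h}({\rm d}\psi) = \partial Z_{i_h}(\psi) = \partial \hat Z(\psi)$ for every $\psi \in \mathscr C^\infty_c(\R^d;\R^n)$ yields $\partial \overline Z = \partial \hat Z$. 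Thus $\overline Z$ competes, so $\mathds M^{(n,\alpha)}(\overline Z) \ge m$, and combined with the previous inequality we obtain $\mathds M^{(n,\alpha)}(\overline Z) = m$, i.e. the minimum is realized by $\overline Z$.

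The only genuinely substantive ingredients are the two facts borrowed from the preceding discussion — weak*-lower semicontinuity of the mass and the continuity of the boundary operator with respect to the weak* topology — both of which reduce componentwise to the classical statements for integral currents. I expect the verification that the fixed boundary survives in the limit to be the step most deserving of care, although it is routine once the weak* topology on rectifiable currents is set up as dual to the locally convex topology on $\mathscr C^\infty_c(\R^d;\Lambda^1_{(n,\alpha)}(\R^d))$.
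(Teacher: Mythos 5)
Your argument is precisely the paper's: the corollary is obtained by the direct method, applying the compactness theorem to a minimizing sequence and then invoking the lower semicontinuity of the mass under weak*-convergence (the paper states this in one line right before the corollary). Your additional verification that the boundary constraint passes to the limit is a correct and worthwhile elaboration of a step the paper leaves implicit.
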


\subsection{Calibrations}
The main reason why we want to treat the transportation problem as a mass minimization problem for rectifiable currents is the availability of the calibration technique as a tool to prove optimality.
\begin{definition}\label{def:calib}{\rm
Consider a rectifiable current $Z=\llbracket\Sigma,\tau,\theta\rrbracket$ with coefficients in $(\Z^n,\|\cdot\|_\alpha)$. A smooth $\R^n$-valued differential
form $\omega:\R^d\to \Lambda^1_{(n,\alpha)}(\R^d)$ is a \emph{calibration} for $Z$ if the following conditions hold:
\begin{itemize}
\item[(i)] for every $x\in\Sigma$ we have that $\langle\omega(x);\tau(x),\theta(x)\rangle=\|\theta(x)\|_\alpha$;
\item[(ii)] the form is closed\footnote{For the sake of readability, in Definition \ref{def:de} we did not define what the differential of a $1$-dimensional $\R^n$-valued form is.
On the one hand, all the examples we will provide have a constant calibration, so its closedness is trivial. On the other hand, it is not difficult to extend the classical definition of differential to $\R^n$-valued
$k$-forms componentwise.}, that is, ${\rm d}\omega=0$;
\item[(iii)] for every $x\in\R^d$, every unit vector $\tau\in\Lambda_1(\R^d)$ and every $h\in\R^n$ we have that
\[
\langle\omega(x);\tau,h\rangle\le\|h\|_\alpha\,.
\]
\end{itemize}
}\end{definition}
\begin{theorem}\label{thm_calib}
Let $Z=\llbracket\Sigma,\tau,\theta\rrbracket$ be a calibrated rectifiable current with coefficients in $(\Z^n,\|\cdot\|_\alpha)$, then $Z$ minimizes the mass among the rectifiable currents with coefficients
in $(\Z^n,\|\cdot\|_\alpha)$ with boundary $\partial Z$.
\end{theorem}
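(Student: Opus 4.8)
The plan is to show that for any competitor $Z'$ with $\partial Z' = \partial Z$, the calibration $\omega$ forces $\mathds M^{(n,\alpha)}(Z') \ge \mathds M^{(n,\alpha)}(Z)$, and that equality is attained by $Z$ itself. The key identity is the standard calibration chain
\[
\mathds M^{(n,\alpha)}(Z) = Z(\omega) = Z'(\omega) \le \mathds M^{(n,\alpha)}(Z'),
\]
so the whole argument reduces to justifying these three relations in our group-coefficient setting.

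First I would verify the leftmost equality. By condition (i) of Definition \ref{def:calib}, for $\Haus{1}$-a.e.\ $x \in \Sigma$ we have $\langle\omega(x);\tau(x),\theta(x)\rangle = \|\theta(x)\|_\alpha$. Integrating over $\Sigma$ against $\Haus{1}$ and using the representation \eqref{def_curr} of $Z(\omega)$ together with the definition of mass in part (ii) of the definition of rectifiable current yields
\[
Z(\omega) = \int_\Sigma \langle\omega(x);\tau(x),\theta(x)\rangle\,d\Haus{1}(x) = \int_\Sigma \|\theta(x)\|_\alpha\,d\Haus{1}(x) = \mathds M^{(n,\alpha)}(Z).
\]
Next I would establish the rightmost inequality for the competitor $Z' = \llbracket\Sigma',\tau',\theta'\rrbracket$. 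Here condition (iii) is the crucial pointwise bound: applied at each $x \in \Sigma'$ with the unit vector $\tau'(x)$ and the coefficient $\theta'(x)$, it gives $\langle\omega(x);\tau'(x),\theta'(x)\rangle \le \|\theta'(x)\|_\alpha$. Integrating over $\Sigma'$ produces
\[
Z'(\omega) = \int_{\Sigma'} \langle\omega(x);\tau'(x),\theta'(x)\rangle\,d\Haus{1}(x) \le \int_{\Sigma'} \|\theta'(x)\|_\alpha\,d\Haus{1}(x) = \mathds M^{(n,\alpha)}(Z').
\]

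The central equality $Z(\omega) = Z'(\omega)$ is where the closedness hypothesis (ii) and the shared boundary enter. Since $\partial Z = \partial Z'$, we have $\partial(Z - Z') = 0$, and the difference $Z - Z'$ acts on $\omega$ as a boundary would act on a primitive; because $\d\omega = 0$, the quantity $(Z - Z')(\omega)$ must vanish. The clean way to see this is componentwise: writing $Z = (Z_1,\ldots,Z_n)$ and $Z' = (Z'_1,\ldots,Z'_n)$ as in the definition, and $\omega = \sum_j \omega_j e_j$, one has $Z(\omega) = \sum_j Z_j(\omega_j)$ with each $\omega_j = \langle\omega;\cdot,g_j\rangle$ a classical closed $1$-form. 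For each $j$, $\partial Z_j = \partial Z'_j$ forces $Z_j - Z'_j$ to be a cycle, and pairing a classical cycle with a closed form gives zero by the classical calibration identity (Stokes' theorem, using that $\omega_j$ is closed and, after localizing near the compactly supported boundary data, admits a primitive). Summing over $j$ gives $Z(\omega) = Z'(\omega)$.

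I expect the main subtlety to lie precisely in this middle step, namely making rigorous the assertion that a closed form annihilates the difference of two currents with equal boundary. In the smooth compact case it is immediate from Stokes, but here $\Sigma,\Sigma'$ are merely rectifiable and the forms need a primitive only up to the closedness condition; one must either invoke that $\d\omega = 0$ globally lets us write $\omega = \d\psi$ on the relevant region, or argue via the definition $\partial Z(\psi) = Z(\d\psi)$ directly. Since all the calibrations exhibited later in the paper are \emph{constant} forms (as the footnote to Definition \ref{def:calib} notes), closedness is automatic and a constant form is exactly $\d$ of a linear function, so $Z(\omega) = Z(\d\psi) = \partial Z(\psi)$ depends only on $\partial Z$; this collapses the obstacle entirely in every case we actually use. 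The remaining care is simply to confirm that the pointwise inequality in (iii) is compatible with the normalization $|\tau| \le 1$ used to define the comass norm $\|\cdot\|^*_\alpha$, so that the integrated estimate is sharp.
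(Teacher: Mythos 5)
Your proposal is correct and follows essentially the same route as the paper: the chain $\mathds M^{(n,\alpha)}(Z)=Z(\omega)=Z'(\omega)\le\mathds M^{(n,\alpha)}(Z')$, with condition (i) giving the first equality, closedness plus the common boundary (via Stokes, or componentwise via a primitive $\omega_j=\d\psi_j$ on the simply connected ambient space) giving the second, and condition (iii) giving the inequality. Your extra discussion of the middle step is a more careful justification of what the paper dispatches with a bare citation of Stokes' Theorem, but it is not a different argument.
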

\begin{proof}
Let $\omega$ be a calibration for $Z$. Consider a competitor $Z'=\llbracket\Sigma',\tau',\theta'\rrbracket$ with boundary $\partial Z'=\partial Z$, then, as a consequence of Stokes' Theorem and
Definition \ref{def:calib}, we have that
\begin{eqnarray*}
\mathds M^{(n,\alpha)}(Z)&=&\int_\Sigma\|\theta(x)\|_\alpha\,d\Haus{1}(x)\\
&\stackrel{{\rm (i)}}{=}&\int_\Sigma\langle\omega(x);\tau(x),\theta(x)\rangle\,d\Haus{1}(x)\\
&\stackrel{{\rm (ii)}}{=}&\int_{\Sigma'}\langle\omega(x);\tau'(x),\theta'(x)\rangle\,d\Haus{1}(x)\\
&\stackrel{{\rm (iii)}}{\le}&\int_{\Sigma'}\|\theta'(x)\|_\alpha\,d\Haus{1}(x)=\mathds M^{(n,\alpha)}(Z')\,,
\end{eqnarray*}
where each equality (resp. inequality) is motivated by the corresponding relation in Definition \ref{def:calib}.
\end{proof}

\begin{remark}\label{calib_rect}
 In \S \ref{sec:tree}, we want to allow the multiplicity of the competitors to take values in $\R^n$. In other words $Z'$ could be a rectifiable current with coefficients in $(\R^n,\|\cdot\|_\alpha)$. The
existence of a calibration
for $Z$ guarantees the optimality of the mass of $Z$ even in this larger class. The proof of this fact is the same as that given above.
\end{remark}

\section{The Gilbert-Steiner problem}\label{sec:GS}
\subsection{The energy minimization problem}\label{em}
Let $X=(x_1,\ldots,x_m)$ and $Y=(y_1,\ldots,y_m)\in\left(\R^d\right)^m$, with $x_i\neq y_j$ for every $i,j=1,\ldots,m$. Denote by $B^{(1,1)}_{X,Y}$ the integral $0$-current
\begin{equation}\label{defbordo}
B^{(1,1)}_{X,Y}:=\sum_{i=1}^m(\delta_{y_i}-\delta_{x_i}).
\end{equation}
Consider the following problem.
\begin{itemize}
\item[(P1)] Among all (classical) integral $1$-currents $T=\llbracket\Sigma,\tau,\theta\rrbracket$ in $\R^d$ with $\partial T=B^{(1,1)}_{X,Y}$, find one which minimizes the \emph{Gilbert-Steiner energy}
\[\mathds E^\alpha(T)=\int_\Sigma|\theta(x)|^\alpha{\rm{d}}\cH^1(x)\,.\]
\end{itemize}
The problem (P1) is better known as the \emph{Gilbert-Steiner problem}\footnote{The actual version of the Gilbert-Steiner problem allows as competitors also polyhedral chains with real multiplicities. The issue whether or not this version is in general equivalent to our problem (P1) will be partially discussed later in \S\ref{rmk:conv}. For the purposes of this paper, such issue is not fundamental, indeed the method we use to prove that a certain object is a solution of problem (P1) is strong enough to prove also the optimality
among polyhedral chains with real multiplicities (see Remark \ref{calib_rect}).}, that is, a transportation problem with sources in $x_1,\ldots,x_m$ and wells in $y_1,\ldots,y_m$. The cost of the transportation
network is precisely the energy $\mathds E^\alpha$.  Note that it is possible that $x_i=x_j$ (respectively: $y_i=y_j$) for some $i\neq j$: indeed a point with higher multiplicity represents a source generating (respectively: a well absorbing) a more intense flow.

\begin{remark}{\rm
The existence of a solution for (P1) is again a consequence of the direct method, indeed \begin{itemize}
\item[(i)] the class of competitors is non-empty, in fact the current $B^{(1,1)}_{X,Y}$ is the boundary of a current with finite energy. Take, for instance, the integral $1$-current
$\hat T=\sum_{i=1}^m\llbracket\sigma_i,\tau_i,1\rrbracket$,
where $\sigma_i$ is the segment between $x_i$ and $y_i$ and $\tau_i=(y_i-x_i)/|y_i-x_i|$, then it is trivial to check that $\partial\hat T=B^{(1,1)}_{X,Y}$ and $\mathds E^\alpha(\hat T)<+\infty$;
\item[(ii)] the set of integral currents in $\R^d$ with boundary $B^{(1,1)}_{X,Y}$ and energy bounded by a constant is contained in a weak* compact set;
\item[(iii)] the energy $\mathds E^\alpha$ is lower semicontinuous with respect to the weak* to\-po\-lo\-gy.
\end{itemize}
}\end{remark}

\subsection{The structure of integral 1-currents}
Let us recall the analogue of Theorem \ref{thm:stru} in the simpler setting of $1$-dimensional integral
currents in $\R^d$, with the double aim of an easier picture and a better understanding of the Gilbert-Steiner problem.
\begin{theorem}\label{thm:struct_1currents}
Let $T$ be an integral $1$-current in $\R^d$, with\footnote{We will denote by $\mathds{M}$ the mass of a classical currents.} $\mathds{M}(\partial T)=2m$. Then
\begin{equation}\label{decomp_1current}
T=\sum_{i=1}^m \tilde T_i +\sum_{\ell=1}^{\infty} \mathring{T}_\ell\,,
\end{equation}
where $\tilde T_i$ are integral $1$-currents associated to injective Lipschitz paths, for every $i=1,\ldots,m$ and $\mathring{T}_\ell$ are integral $1$-currents associated to Lipschitz paths
$\gamma_\ell:[0,1]\to\R^d$
satisfying $\gamma_\ell(0)=\gamma_\ell(1)$
and injective on $(0,1)$, for every $\ell\ge 1$. In particular $\partial \mathring{T}_\ell=0$ for every $\ell\ge 1$. Moreover
\begin{equation}\label{spezza_massa}
\mathds M(T)=\sum_{i=1}^m \mathds M(\tilde T_i)+\sum_{\ell=1}^{\infty}\mathds M(\mathring{T}_\ell).
\end{equation}
\end{theorem}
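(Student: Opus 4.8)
The plan is to obtain the statement by specializing to dimension one the classical decomposition of integral currents into \emph{indecomposable} pieces, so that \eqref{decomp_1current} and \eqref{spezza_massa} come out simultaneously. Recall that an integral current $S$ is indecomposable if it cannot be written as $S = R + (S-R)$ with $R$ integral, $R\notin\{0,S\}$, and with \emph{both} $\mathds{M}(R)+\mathds{M}(S-R)=\mathds{M}(S)$ and $\mathds{M}(\partial R)+\mathds{M}(\partial(S-R))=\mathds{M}(\partial S)$. The first step is to invoke the decomposition theorem (4.2.25 in \cite{Fe1}): the integral $1$-current $T$ can be written as an at most countable sum $T=\sum_k T_k$ of indecomposable integral $1$-currents for which masses and boundary masses add up, i.e. $\mathds{M}(T)=\sum_k\mathds{M}(T_k)$ and $\mathds{M}(\partial T)=\sum_k\mathds{M}(\partial T_k)$. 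The additivity of the masses is already \eqref{spezza_massa}, provided the pieces $T_k$ are the ones described in the statement.

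The second, and technically central, step is the characterization of the indecomposable factors. I would show that each $T_k$ is, up to reversing orientation, the rectifiable current $\llbracket\Gamma_k,\tau_k,1\rrbracket$ carried by an injective Lipschitz curve $\gamma_k:[0,1]\to\R^d$ of multiplicity $1$, which is either an \emph{arc} ($\gamma_k(0)\neq\gamma_k(1)$, hence $\partial T_k=\delta_{\gamma_k(1)}-\delta_{\gamma_k(0)}$) or a \emph{loop} ($\gamma_k(0)=\gamma_k(1)$ and $\gamma_k$ injective on $(0,1)$, hence $\partial T_k=0$). That the multiplicity must be $1$ is forced by indecomposability: a current carried with integer multiplicity $q\ge 2$ splits into $q$ copies of the underlying multiplicity-$1$ current, a splitting that is additive both in mass and in boundary mass. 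The existence of the Lipschitz parametrization is the integer-coefficient analogue of Theorem \ref{thm:stru}, reworked in 2.3 of \cite{Con_Gar_Mas}; I regard producing this parametrization --- i.e. proving that an indecomposable integral $1$-current really is a simple curve, with no hidden self-cancellation --- as the main obstacle of the argument.

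The last step is pure bookkeeping on boundary masses. An arc contributes $\mathds{M}(\partial T_k)=2$ and a loop contributes $\mathds{M}(\partial T_k)=0$; since these add up to $\mathds{M}(\partial T)=2m$, exactly $m$ of the factors are arcs. Labelling these $\tilde T_1,\dots,\tilde T_m$ and the remaining (at most countably many) loops $\mathring T_\ell$, we obtain \eqref{decomp_1current} together with $\partial\mathring T_\ell=0$, while \eqref{spezza_massa} is the mass additivity recorded in the first step. As an alternative to invoking \cite{Fe1} directly, one could argue by iterated extraction: repeatedly peel off a single injective arc joining a source of $\partial T$ to a well, lowering $\mathds{M}(\partial T)$ by exactly $2$ and without mass cancellation, remove the resulting $m$ arcs to be left with an integral $1$-cycle, and finally decompose that cycle into loops of summable mass. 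The delicate point in this variant is identical to the one above: guaranteeing that each extracted arc is simple and that no cancellation of mass or of boundary mass occurs.
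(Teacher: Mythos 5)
Your proposal is correct and follows exactly the route the paper itself relies on: the theorem is stated there without proof as a recalled classical fact, resting on the decomposition of an integral $1$-current into indecomposable pieces with additivity of mass and boundary mass (4.2.25 of \cite{Fe1}, revisited in 2.3 of \cite{Con_Gar_Mas}), the characterization of indecomposable integral $1$-currents as multiplicity-one simple arcs or loops, and the boundary-mass count singling out exactly $m$ arcs. You also correctly identify the only genuinely delicate point, namely that an indecomposable integral $1$-current is carried by a simple Lipschitz curve, which is precisely the content of the cited result.
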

\begin{remark}\label{rmk_permut}
Since $\mathds{M}(\partial T)=2m$ we can represent $\partial T=\sum_{i=1}^m(\delta_{y_i}-\delta_{x_i})$, where $(x_1,\ldots,x_m)$ and $(y_1,\ldots,y_m)\in\left(\R^d\right)^m$, satisfy $x_i\neq y_j$ for
every $i,j=1,\ldots,m$.
Without loss of generality, we may assume that there exists a permutation $\sigma\in{\mathcal S}_m$ such that in the decomposition \eqref{decomp_1current} we have
$\partial\tilde T_i=\delta_{y_i}-\delta_{x_{\sigma(i)}}$.
\end{remark}

\begin{remark}\label{rmk:ordec}
In Theorem \ref{thm:struct_1currents}, if we represent $\tilde T_i=\llbracket\tilde\Gamma_i,\tilde\tau_i,\tilde\theta_i\rrbracket$, we may assume that the multiplicity $\tilde\theta_i$ is always non negative,
hence
if $\cH^1(\tilde\Gamma_i\cap\tilde\Gamma_j)>0$, then by \eqref{spezza_massa} $\tilde\tau_i=\tilde\tau_j$ $\cH^1$-a.e. on $\tilde\Gamma_i\cap\tilde\Gamma_j$. This implies that the multiplicity of the
current $\tilde T_1+\ldots+\tilde T_m$
is $\tilde\theta_1+\ldots+\tilde\theta_m$.
\end{remark}

\section{Gilbert-Steiner problem and currents with coefficients in $\R^n$}\label{sec:equiv}
\subsection{The mass minimization problem}
In this section we show that the Gilbert-Steiner problem (P1) can be rephrased as a mass minimization problem for rectifiable currents with coefficients in $(\Z^n,\|\cdot\|_\alpha)$, where $n$ is the number of sources (resp. wells),
counted with multiplicity, in the boundary datum $X=(x_1,\ldots,x_n), Y=(y_1,\ldots,y_n)\in(\R^d)^n$.
\begin{itemize}
\item[(P2)] Among all permutations $\sigma\in{\mathcal S}_n$ and among all rectifiable currents $Z=\llbracket\Sigma,\tau,\theta\rrbracket$ in $\R^d$ with coefficients in $(\Z^n,\|\cdot\|_\alpha)$ and boundary
\[
B^{(n,\alpha)}_{\sigma(X),Y}=\sum_{j=1}^ng_j(\delta_{y_j}-\delta_{\sigma(x_j)})\,,
\]
find one which minimizes the mass
\[
\mathds M^{(n,\alpha)}(Z)=\int_\Sigma\|\theta(x)\|_\alpha{\rm{d}}\cH^1(x)\,.
\]
\end{itemize}
It will be clear from Definition \ref{def:pass} that fixing the permutation $\sigma$ in (P2) corresponds somehow to prescribe which source goes into which well (see also the description of the \emph{who goes where} problem given in \cite{Ber_Ca_Mo2}). This is not prescribed, instead, when we fix the boundary datum in (P1).
This is the reason
why in (P2) we need to minimize also among all permutations $\sigma\in{\mathcal S}_n$. Of course we may drop this condition when the set of sources contains only one point with multiplicity $n$, as in the case of the irrigation problem.

\subsection{The equivalence of (P1) and (P2)}

In the next theorem we state the equivalence between problems (P1) and (P2). Let us first describe a canonical way to associate
to a competitor for the problem (P1) a competitor for the problem (P2) and vice versa.

\begin{definition}\label{def:pass}{\rm
Let $X=(x_1,\ldots,x_n), Y=(y_1,\ldots,y_n)\in(\R^d)^n$, with $x_i\neq y_j$ for every $i,j=1,\ldots,n$.
\begin{itemize}
\item[(1)]
Given an integral current $T$ in $\R^d$ with boundary $\partial T=B^{(1,1)}_{X,Y}$ as in \eqref{defbordo}, consider a decomposition according to Theorem \ref{thm:struct_1currents}
(indeed, $\mathds M(\partial T)=2n$), satisfying the assumption of Remark \ref{rmk_permut}, that is
\[
T=\sum_{i=1}^n\tilde T_i+\sum_{\ell=1}^\infty \mathring T_\ell\,
\]
and there exists $\sigma\in{\mathcal S}_n$ such that
\[
\partial\tilde T_i=\delta_{y_i}-\delta_{x_{\sigma(i)}}\,.
\]
We set
\[
Z[T]:=(\tilde T_1,\ldots,\tilde T_n)\,,
\]
(according to the basis $(g_1,\ldots,g_n)$) which is a rectifiable current with coefficients in $(\Z^n,\|\cdot\|_\alpha)$ and boundary
\[
\partial (Z[T])=B_{\sigma(X),Y}^{(n,\alpha)}\,.
\]
Notice that $Z[T]$ depends on the decomposition chosen for $T$, therefore it may be non-unique.

\item[(2)]Vice versa, given a rectifiable current $Z$ with coefficients in $(\Z^n,\|\cdot\|_\alpha)$, we can write it componentwise as $Z=(Z_1,\ldots,Z_n)$ and we set
\[
T[Z]:=Z_1+\ldots+Z_n\,,
\]
which is an integral current with boundary $B_{X,Y}^{(1,1)}$.
\end{itemize}
}\end{definition}
\begin{remark}\label{rmk:cone}{\rm
Consider a rectifiable current $Z=\llbracket\Sigma,\tau,\theta\rrbracket$ with coefficients in $(\Z^n,\|\cdot\|_\alpha)$. Then
\[
\mathds E^\alpha(T[Z])\le\mathds M^{n,\alpha}(Z)\,.
\]
Indeed, the integral current $T[Z]$ has multiplicity $\theta_1+\ldots+\theta_n$, where $\theta=\theta_1g_1+\ldots+\theta_ng_n$ is the multiplicity of $Z$. Moreover,
\begin{equation}\label{eqqq}
\Big|\sum_{j=1}^n\theta_j\Big|^\alpha\le\Big(\sum_{j=1}^n|\theta_j|^\frac 1\alpha\Big)^\alpha=\|\theta\|_\alpha\,,
\end{equation}
because $\theta_j\in\Z$ for any $j=1,\ldots,n$ and $0<\alpha<1$. Notice that the equality holds in \eqref{eqqq} if and only if $\theta_j\in\{0,1\}$ for every $j=1,\ldots,n$.
}\end{remark}
\begin{proposition}\label{prop:coeff}
Consider an integral current $T$ in $\R^d$ with $\mathds M(\partial T)=2n$ and a decomposition of type \eqref{decomp_1current} as in Theorem \ref{thm:struct_1currents}. Then,
\[
\mathds M^{n,\alpha}(Z[T])=\mathds E^\alpha\Big(\sum_{i=1}^n\tilde T_i\Big)\,.
\]
\end{proposition}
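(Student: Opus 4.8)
The plan is to reduce the claimed identity to a pointwise statement about multiplicities and then to recognize it as the equality case of the elementary inequality \eqref{eqqq}. First I would fix the decomposition $T=\sum_{i=1}^n\tilde T_i+\sum_{\ell=1}^\infty\mathring T_\ell$ under consideration and invoke Remark \ref{rmk:ordec}: after the normalization described there I may assume that every multiplicity $\tilde\theta_i$ is non-negative and that the orientations $\tilde\tau_i$ agree $\Haus{1}$-a.e.\ on every overlap $\tilde\Gamma_i\cap\tilde\Gamma_j$ of positive measure. This allows me to pick a single orientation $\tau$ on $\Sigma:=\bigcup_{i=1}^n\tilde\Gamma_i$ with $\tau=\tilde\tau_i$ on $\tilde\Gamma_i$. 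I would then record that each $\tilde\theta_i\equiv 1$ on $\tilde\Gamma_i$: indeed $\tilde T_i$ is carried by an injective Lipschitz path with constant integer multiplicity, and by Remark \ref{rmk_permut} its boundary $\partial\tilde T_i=\delta_{y_i}-\delta_{x_{\sigma(i)}}$ has unit coefficients, which forces the multiplicity to be $1$.

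With this normalization in hand, I would write $Z[T]=\llbracket\Sigma,\tau,\theta\rrbracket$ as in Definition \ref{def:pass} and identify its $\R^n$-valued multiplicity as $\theta(x)=\sum_{i\,:\,x\in\tilde\Gamma_i}g_i$; equivalently, the $i$-th component of $\theta(x)$ is the indicator $\mathbf 1_{\tilde\Gamma_i}(x)\in\{0,1\}$, where the consistency of $\tau$ with each $\tilde\tau_i$ guarantees that no sign appears. On the other hand, Remark \ref{rmk:ordec} tells me that the integral current $\sum_{i=1}^n\tilde T_i$ has scalar multiplicity $\theta'(x)=\sum_{i=1}^n\tilde\theta_i(x)=\sum_{i\,:\,x\in\tilde\Gamma_i}1$ and is carried by the same set $\Sigma$.

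It then remains to compare the two integrands $\Haus{1}$-a.e.\ on $\Sigma$. Since every component $\theta_i(x)=\mathbf 1_{\tilde\Gamma_i}(x)$ lies in $\{0,1\}$, one has $\theta_i(x)^{1/\alpha}=\theta_i(x)$, so the equality case of \eqref{eqqq} recorded in Remark \ref{rmk:cone} gives
\[
\|\theta(x)\|_\alpha=\Big(\sum_{i=1}^n\theta_i(x)^{1/\alpha}\Big)^\alpha=\Big(\sum_{i=1}^n\theta_i(x)\Big)^\alpha=|\theta'(x)|^\alpha\,.
\]
Integrating this pointwise identity over $\Sigma$ against $\Haus{1}$ immediately yields $\mathds M^{n,\alpha}(Z[T])=\mathds E^\alpha\big(\sum_{i=1}^n\tilde T_i\big)$. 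I expect the only genuinely delicate point to be the bookkeeping in the first step: one must use Remark \ref{rmk:ordec} to make the chosen orientations globally consistent, so that the components of $\theta$ are honestly the $\{0,1\}$-valued indicators and no cancellation can occur. Once this is secured, the statement is precisely the pointwise equality case of \eqref{eqqq}, and the remaining computation is routine.
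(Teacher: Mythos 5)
Your proof is correct and follows essentially the same route as the paper: identify the components of the multiplicity of $Z[T]$ as $\{0,1\}$-valued (equal to $1$ a.e.\ on each $\tilde\Gamma_i$), apply the equality case of \eqref{eqqq}, and use Remark \ref{rmk:ordec} to identify the multiplicity of $\sum_i\tilde T_i$ with $\tilde\theta_1+\ldots+\tilde\theta_n$. The extra care you take with the orientation bookkeeping is a slightly more explicit version of what the paper leaves implicit, but it is not a different argument.
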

\begin{proof}
Let us denote by $\theta=\tilde\theta_1g_1+\ldots+\tilde\theta_ng_n$ the multiplicity of $Z[T]$. By definition of $Z[T]$, each $\tilde\theta_j$ is the multiplicity of
$\tilde T_j=\llbracket\tilde\Gamma_j,\tilde\tau_j,\tilde\theta_j\rrbracket$. Since $\mathds M(\partial T)=2n$, then $\tilde\theta_j\in\{0,1\}$ almost everywhere on $\bigcup_j\tilde\Gamma_j$ (in particular,
$\tilde\theta_j=1$ a.e. on $\tilde\Gamma_j$).
Thus, we can infer that
\[
\|\theta\|_\alpha=|\tilde\theta_1+\ldots+\tilde\theta_n|^\alpha
\]
almost everywhere on $\bigcup_j\tilde\Gamma_j$. The conclusion follows, because the multiplicity of $\sum_{j=1}^n\tilde T_j$ is precisely $\tilde\theta_1+\ldots+\tilde\theta_n$ (see Remark \ref{rmk:ordec}).
\end{proof}

\begin{theorem}\label{equiva}
Let $X$ and $Y$ be as in Definition \ref{def:pass}. Then problems {\rm (P1)} and {\rm (P2)} are equivalent. More precisely, the following results hold.
\begin{itemize}
\item[(1)] If $T$ is an energy minimizer for the Gilbert-Steiner problem {\rm (P1)} with boundary $B_{X,Y}^{(1,1)}=\sum_{j=1}^n(\delta_{y_j}-\delta_{x_j})$, then $Z[T]$ is a mass minimizer for the problem
{\rm (P2)}. Moreover $\mathds E^\alpha(T)=\mathds M^{n,\alpha}(Z[T])$. 
\item[(2)] If $Z$ is a solution for problem {\rm (P2)} with the datum $(X,Y)$, then $T[Z]$ minimizes the energy
$\mathds E^\alpha$ among all integral currents with boundary $B_{X,Y}^{(1,1)}$. Moreover $\mathds M^{n,\alpha}(Z)=\mathds E^\alpha(T[Z])$.
\end{itemize}
\end{theorem}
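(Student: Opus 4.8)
The plan is to reduce the whole equivalence to the single equality of optimal values $v_1 = v_2$, where $v_1$ and $v_2$ denote the minima in (P1) and (P2) respectively (both exist, $v_1$ by the direct method recalled after (P1) and $v_2$ by Corollary \ref{cor:dirmeth} together with the fact that one minimizes over only finitely many permutations $\sigma\in\mathcal S_n$). Once $v_1=v_2$ is established through two matching inequalities, assertions (1) and (2) follow simply by tracking when those inequalities become equalities.

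First I would prove $v_1\le v_2$. Let $Z$ be any competitor for (P2), associated to a permutation $\sigma$. Then $T[Z]=Z_1+\ldots+Z_n$ has boundary $\partial T[Z]=\sum_{j=1}^n(\delta_{y_j}-\delta_{\sigma(x_j)})=B^{(1,1)}_{X,Y}$, since the permutation only reshuffles the sources; hence $T[Z]$ is admissible for (P1). By Remark \ref{rmk:cone}, $\mathds E^\alpha(T[Z])\le\mathds M^{n,\alpha}(Z)$, and choosing $Z$ optimal gives $v_1\le\mathds E^\alpha(T[Z])\le v_2$. For the reverse inequality $v_2\le v_1$, let $T$ be any competitor for (P1) and fix a decomposition $T=\sum_{i=1}^n\tilde T_i+\sum_{\ell}\mathring T_\ell$ as in Theorem \ref{thm:struct_1currents} and Remark \ref{rmk_permut}. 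Then $Z[T]$ is a competitor for (P2) (with the permutation $\sigma$ supplied by the decomposition), and Proposition \ref{prop:coeff} yields $\mathds M^{n,\alpha}(Z[T])=\mathds E^\alpha\big(\sum_i\tilde T_i\big)$. It then remains to compare $\mathds E^\alpha\big(\sum_i\tilde T_i\big)$ with $\mathds E^\alpha(T)$.

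This last comparison is the only genuinely delicate point, and is where I expect the main obstacle: because the integrand $t\mapsto t^\alpha$ is concave, a priori cancellations between the path part $\sum_i\tilde T_i$ and the loop part $\sum_\ell\mathring T_\ell$ could make the energy density of $T$ drop below that of $\sum_i\tilde T_i$, so that discarding the loops might \emph{raise} the energy. The observation that rules this out is the mass additivity \eqref{spezza_massa}: since $\mathds M(T)=\sum_i\mathds M(\tilde T_i)+\sum_\ell\mathds M(\mathring T_\ell)$ while mass is always subadditive under superposition, there can be no cancellation in the decomposition, i.e.\ at $\Haus{1}$-a.e.\ point all pieces passing through it carry multiplicities of the same sign (this extends Remark \ref{rmk:ordec} so as to include the loops). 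Writing $\theta$ for the multiplicity of $T$ and $\tilde\theta=\tilde\theta_1+\ldots+\tilde\theta_n$ for that of $\sum_i\tilde T_i$, this forces $|\theta(x)|\ge|\tilde\theta(x)|$ for $\Haus{1}$-a.e.\ $x\in\bigcup_i\tilde\Gamma_i$. Integrating the increasing function $t\mapsto t^\alpha$ over $\bigcup_i\tilde\Gamma_i$ and discarding the nonnegative contribution carried by the loop supports gives $\mathds E^\alpha\big(\sum_i\tilde T_i\big)\le\mathds E^\alpha(T)$. Hence $v_2\le\mathds M^{n,\alpha}(Z[T])=\mathds E^\alpha\big(\sum_i\tilde T_i\big)\le\mathds E^\alpha(T)$, and optimizing over $T$ yields $v_2\le v_1$.

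Combining the two inequalities gives $v_1=v_2$. For (1), if $T$ is optimal for (P1) then the chain $v_2\le\mathds M^{n,\alpha}(Z[T])=\mathds E^\alpha\big(\sum_i\tilde T_i\big)\le\mathds E^\alpha(T)=v_1=v_2$ collapses to equalities, so $Z[T]$ is optimal for (P2) and $\mathds M^{n,\alpha}(Z[T])=\mathds E^\alpha(T)$. For (2), if $Z$ is optimal for (P2) then $v_1\le\mathds E^\alpha(T[Z])\le\mathds M^{n,\alpha}(Z)=v_2=v_1$ again collapses to equalities, so $T[Z]$ minimizes $\mathds E^\alpha$ among integral currents with boundary $B^{(1,1)}_{X,Y}$ and $\mathds E^\alpha(T[Z])=\mathds M^{n,\alpha}(Z)$. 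This completes the proof, the crucial input being the no-cancellation consequence of \eqref{spezza_massa} used in the middle paragraph.
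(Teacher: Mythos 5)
Your proposal is correct and takes essentially the same route as the paper: both directions rest on the same two chains of inequalities built from Remark \ref{rmk:cone}, Proposition \ref{prop:coeff}, and the loop-discarding inequality $\mathds E^\alpha\big(\sum_i\tilde T_i\big)\le\mathds E^\alpha(T)$ derived from \eqref{spezza_massa}, with your ``prove $v_1=v_2$ first, then collapse the chains'' organization being only a cosmetic repackaging of the paper's direct comparison against arbitrary competitors. If anything, your treatment is slightly more careful at the two points the paper glosses over: you spell out why the concavity of $t\mapsto t^\alpha$ does not obstruct discarding the loops (no cancellation, hence $|\theta|\ge|\tilde\theta|$ pointwise), and your sandwich argument in part (2) forces the final equality $\mathds M^{n,\alpha}(Z)=\mathds E^\alpha(T[Z])$ without invoking, as the paper does, the unproved claim that every component of a minimizer $Z$ has multiplicity in $\{0,1\}$.
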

\begin{proof}
\begin{itemize}
\item[(1)] Let $T$ be a solution of the Gilbert-Steiner problem (P1). Since we want to show that $Z[T]$ is a solution for the problem (P2), then we compare its mass with the mass of an admissible competitor for (P2), that is, 
a rectifiable current $Z'$ with coefficients in $(\Z^n,\|\cdot\|_\alpha)$ and boundary $B_{\sigma'(X),Y}^{(n,\alpha)}$ for some $\sigma'\in{\mathcal S}_n$. Then, we have that
\[
\mathds M^{n,\alpha}(Z')\stackrel{\ref{rmk:cone}}{\ge}\mathds E^\alpha(T[Z'])\ge\mathds E^\alpha(T)\stackrel{\eqref{spezza_massa}}{\ge}\mathds E^\alpha\Big(\sum_{i=1}^n\tilde T_i\Big)\stackrel{\ref{prop:coeff}}{=}\mathds M^{n,\alpha}(Z[T])\,.
\]
Let us explain that the first inequality is due to Remark \ref{rmk:cone}, the second one holds because $T$ is a minimizer for the energy $\mathds E^\alpha$ with boundary $B_{X,Y}^{(1,1)}$
and the third one holds because of \eqref{spezza_massa}. Finally, the last equality has already been proved in Proposition \ref{prop:coeff}. Notice also that the third inequality must be in fact an equality,
because
$\sum_{i=1}^n\tilde T_i$ is a competitor of $T$ in (P1).

\item[(2)] Let $Z$ be a solution of the problem (P2). We claim that $T[Z]$ is a solution of (P1): indeed, if $T'$ is an integral current with boundary $B_{X,Y}^{(1,1)}$, then we have that
\[
\mathds E^\alpha(T')\stackrel{\eqref{spezza_massa}}{\ge}\mathds E^\alpha\Big(\sum_{i=1}^n\tilde T'_i\Big)\stackrel{\ref{prop:coeff}}{=}\mathds M^{n,\alpha}(Z[T'])\ge\mathds M^{n,\alpha}(Z)\stackrel{\ref{rmk:cone}}{\ge}\mathds E^\alpha(T[Z])\,.
\]
As before, let us remind that the first inequality is a consequence of \eqref{spezza_massa} and the equality is due to Proposition \ref{prop:coeff}. Finally, the last inequality relies on
Remark \ref{rmk:cone}. This last is an equality, indeed. This follows from the fact that the multiplicity of every component $Z_j=\llbracket\Sigma_j,\tau_j,\theta_j\rrbracket$ of $Z$ is 1
($\cH^1$-a.e. on $\Sigma_j$) and the inequality in \eqref{eqqq} is an equality when every $\theta_j$ is either 0 or 1.
\end{itemize}
\end{proof}

\subsection{Examples of calibreted minima}
We now show some examples of calibrations for the problem (P2) with the only aim to make the reader confident with this technique and to throw light on some issues that one may not notice immediately in the general formulation. In the first example there are two sources and two wells with unit multiplicity on the vertices of a rectangle in $\R^2$. The second example is an irrigation problem with only two wells, but where we allow also higher multiplicities.
\begin{example}\label{ex:calib}{\rm
Let $\alpha=1/2$ and consider the following points in the plane
\[
x_1:=(-3,-2),\;x_2:=(-2,-3),\;y_1:=(1,0),\;y_2:=(0,1),\;z:=(-2,-2)\,.
\]
Consider the 0-dimensional current in $\R^2$ with coefficients in $\Z^2$ given by
\[
B_{(x_1,x_2),(y_1,y_2)}^{(2,1/2)}=g_1(\delta_{y_1}-\delta_{x_1})+g_2(\delta_{y_2}-\delta_{x_2})\,.
\]

We claim that a solution of problem (P2) with boundary $B_{(x_1,x_2),(y_1,y_2)}^{(2,1/2)}$ is
\[
Z=\llbracket\sigma_1,\tau_1,g_1\rrbracket+\llbracket\sigma_2,\tau_2,g_2\rrbracket+\llbracket\sigma_3,\tau_3,g_1+g_2\rrbracket+\llbracket\sigma_4,\tau_1,g_1\rrbracket+\llbracket\sigma_5,\tau_2,g_2\rrbracket
\]
where $\sigma_1$ is the segment from $x_1$ to $z$ with orientation $\tau_1=\left(1,0\right)$, $\sigma_2$ is the segment from $x_2$ to $z$ with orientation $\tau_2=\left(0,1\right)$, $\sigma_3$ is the segment from $z$ to $0$ with orientation $\tau_3=\left(\sqrt{2}/2,\sqrt{2}/2\right)$, $\sigma_4$ is the segment from $0$ to $y_1$ and $\sigma_5$ is the segment from $0$ to $y_2$.

Firstly, we exhibit a constant\footnote{Since $\omega$ is constant, condition (ii) in Definition \ref{def:calib} is automatically satisfied.} calibration $\omega(x)\equiv\omega\in\Lambda^1_{(2,1/2)}(\R^2)$, which proves that $Z$ is a mass minimizer for the boundary $B_{(x_1,x_2),(y_1,y_2)}^{(2,1/2)}$. 
Then, we will prove that the achieved minimum (that is, the value $\mathds M^{(2,1/2)}(Z)$) is less or equal to the minimum for the boundary $B_{(x_1,x_2),(y_2,y_1)}^{(2,1/2)}$ (remember that in (P2) we require to minimize among all possible permutations). 

Let us remind that an $\R^2$-valued covector acts bilinearly
on the pair $(\tau,g)\in\Lambda_1(\R^2)\times\Z^2$  and we can represent it as a $2\times 2$ matrix with real coefficients. We let
\[
\omega(x)\equiv\left(
\begin{array}{cc}
1 & 0 \\
0 & 1
\end{array}
\right)\,.
\]
Now we can easily check that $\langle\omega;\tau_i,g_i\rangle=1$ for $i=1,2$, while
\[
\langle\omega;\tau_3,g_1+g_2\rangle=\sqrt 2=\|g_1+g_2\|_\alpha\,,
\]
so condition (i) in Definition \ref{def:calib} is fulfilled. Condition (iii), concerning the norm of $\omega$, is trivially verified (for $\alpha=1/2$ it is sufficient to check that the Euclidean norm of the matrix is less or equal to 1).

To prove that the minimum $\mathds M^{(2,1/2)}(Z)$ is less or equal to the minimum for the boundary $B_{(x_1,x_2),(y_2,y_1)}^{(2,1/2)}$, we fix a competitor $Z'$ for this second problem. Without loss of generality, we can consider $Z'$ as the sum of two curves, one with multiplicity $g_1$ and one with multiplicity $g_2$. There are two cases: either the two curves in $Z'$ do not intersect, in this case
\begin{equation}\label{sep}
\mathds{M}^{(2,1/2)}(Z')\ge|x_1-y_2|+|x_2-y_1|=6\sqrt 2>8=\mathds{M}^{(2,1/2)}(Z)\,,
\end{equation}
or the curves intersect at some point $z'$. In this case, it is sufficient to ``switch'' the multiplicity $g_1$ with $g_2$ from that point to the part of the boundary supported on $\{y_1,y_2\}$. 

\begin{figure}[htbp]
\begin{center}
\scalebox{1}{
\input{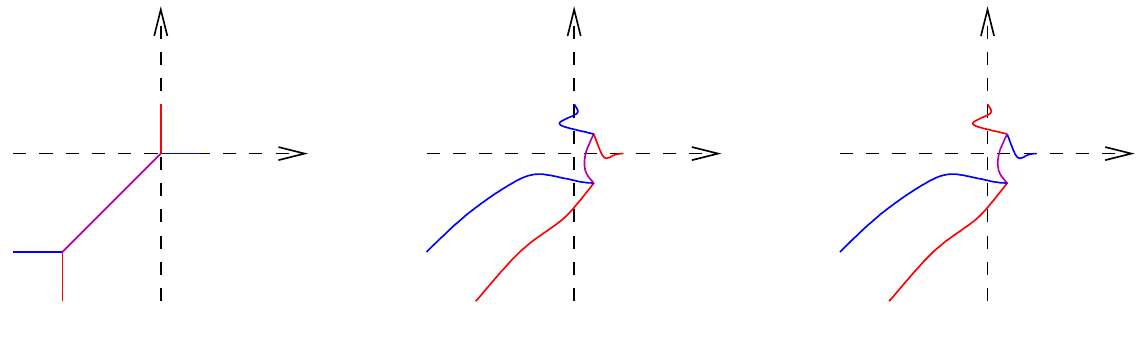_t}
}
\end{center}
\end{figure}

In this way, we created a competitor $Z''$ for the problem with boundary $B_{(x_1,x_2),(y_1,y_2)}^{(2,1/2)}$ with the property that
\[
\mathds{M}^{(2,1/2)}(Z')=\mathds{M}^{(2,1/2)}(Z'')\,.
\]
Therefore we may conclude
\[
\mathds{M}^{(2,1/2)}(Z')=\mathds{M}^{(2,1/2)}(Z'')\ge\mathds{M}^{(2,1/2)}(Z)\,.
\]
}
\end{example}
\begin{example}{\rm
Fix $\alpha=1/2$ and $\theta:=\arccos(1/\sqrt{3})\in(0,\pi/2)$. Fix $x,y_1,y_2\in\R^2$ given by $x=(-\cos\theta,-\sin\theta)$, $y_1=(1,0)$ and $y_2=(0,1)$. Denote 
$$X=(x,x,x)\in(\R^2)^{3}$$ 
and 
$$Y=(y_1,y_2,y_2)\in(\R^2)^{3}.$$
We claim that a mass-minimizing current with coefficients in $(\Z^2,\|\cdot\|_{1/2})$ among those with boundary $B_{X,Y}^{(2,1/2)}$ is
\[
T=\llbracket\hat\sigma_0,\hat\tau_0,g_1+g_2+g_3\rrbracket+\llbracket\hat\sigma_1,\hat\tau_1,g_1\rrbracket+\llbracket\hat\sigma_2,\hat\tau_2,g_2+g_3\rrbracket
\]
where $\hat\sigma_0$ is the segment from $x$ to $0$ and $\hat\sigma_i$ is the segment from $0$ to $y_i$ for $i=1,2$. The orientation $\hat\tau_i$ on each segment goes from the first to the second extreme point.

\begin{figure}[htbp]
\begin{center}
\scalebox{1}{
\input{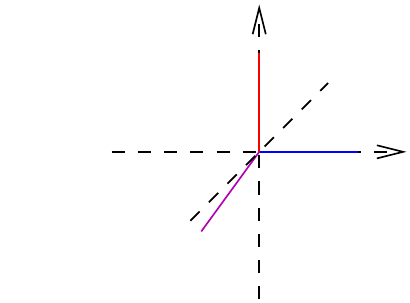_t}
}
\end{center}
\end{figure}

The constant $\R^3$-valued form 
\[
\hat\omega(x)\equiv\left(\begin{array}{rr}
1 & 0 \\
0 & \sqrt{2}/2\\
0 & \sqrt{2}/2
\end{array}\right)
\]
is a calibration for $T$. Indeed 
$$\langle\hat\omega;\hat\tau_0,g_1+g_2+g_3\rangle=\sqrt{3}=\|g_1+g_2+g_3\|_{1/2};$$
$$\langle\hat\omega;\hat\tau_1,g_1\rangle=1=\|g_1\|_{1/2};$$
$$\langle\hat\omega;\hat\tau_2,g_2+g_3\rangle=\sqrt{2}=\|g_2+g_3\|_{1/2},$$
therefore condition (i) is satisfied. Moreover $\|\langle\hat\omega;\tau,\cdot\rangle\|_{1/2}=1$ for every $\tau\in\R^2$, hence (iii) is also satisfied. As before, condition (ii) is trivially verified because $\hat\omega$ is constant.

}\end{example}

\subsection{Rescaled problems}\label{subsec:resc}

In \S \ref{sec:tree} we want to deal with a boundary made of infinitely many points. To this aim it is worthwhile mentioning the approach by Xia in \cite{Xia1}, where the continuous version of the Gilbert-Steiner problem (i.e. $\mu_-$ and $\mu_+$ are not necessarily atomic measures) is defined through the relaxation of the Gilbert-Steiner energy. Given 
a normal 1-dimensional current $T$ one considers all possible sequences $T_n$ of polyedral currents which are approximating $T$ in the flat norm, then one defines the energy of $T$ as
$$\mathds E^\alpha(T):=\inf_{T_n}\{\liminf_{n}\mathds E^\alpha(T_n)\}.$$
As shown in Theorem 2.7 of \cite{Xia2}, only rectifiable currents enter in the competition, since non-rectifiable normal currents have infinite energy. 

As a first step to prove the optimality of our irrigation network in \S\ref{sec:tree}, it is convenient to introduce a renormalized version of the problem (P1), in which the boundary is
rescaled to have mass equal to 2.
More precisely, consider the following problem, where $B_{X,Y}^{(1,1)}$ and $n$ are as in \S\ref{em}.
\begin{itemize}
\item[(P1')] Among all rectifiable $1$-currents $T'=\llbracket\Sigma,\tau,\theta\rrbracket$ in $\R^d$ with real coefficients, with $T'=n^{-1}T$ and $T$ is an integral current with $\partial T=B_{X,Y}^{(1,1)}$,
find one which minimizes the extended Gilbert-Steiner energy
\[
\mathds E^\alpha(T')=\int_\Sigma\|\theta\|^\alpha\,d\Haus{1}\,.
\]
\end{itemize}
There is a clear bijection between the competitors of problems (P1) and (P1'). The feature we want to underline is how the energy rescales. Indeed 
\begin{equation}\label{e1}
\mathds E^\alpha(n^{-1}T)=n^{-\alpha}\mathds E^\alpha(T),
\end{equation}
for every $T$ which is a competitor for (P1).

The quantity we minimize in the problem (P2), instead, has naturally a linear rescaling, therefore if we want to maintain the equality between the minima, we have to rescale the problem (P2) differently. It turns out
that the correct version of the rescaled problem is the following.
\begin{itemize}
\item[(P2')] Among all permutations $\sigma\in{\mathcal S}_n$ and among all the rectifiable currents $Z'=\llbracket\Sigma,\tau,\theta\rrbracket$ in $\R^d$ with coefficients in $n^{-\alpha}\Z^n$ and with boundary
\[
\partial Z'=n^{-\alpha} B_{\sigma(X),Y}^{(n,\alpha)}=n^{-\alpha}\sum_{j=1}^n g_j(\delta_{y_j}-\delta_{x_{\sigma(j)}})\,,
\]
find one which minimizes the mass
\[
\mathds{M}^{n,\alpha}(Z')=\int_\Sigma\|\theta\|_\alpha\,{\rm{d}}\cH^1.
\]
\end{itemize}
Note that, if $Z$ is a competitor for (P2), then $n^{-\alpha}Z$ is a competitor for (P2') and naturally 

\begin{equation}\label{e2}
\mathds{M}^{n,\alpha}(n^{-\alpha}Z)=n^{-\alpha}\mathds{M}^{n,\alpha}(Z).
\end{equation}
The canonical way to associate to a competitor for the problem {\rm (P1')} a competitor for the problem {\rm (P2')} and vice versa is a
straightforward modification of the method explained in Definition \ref{def:pass}.
\begin{itemize}
\item[(1)] To a competitor $T'$ for {\rm (P1')} we associate the current
$$
Z'[T']:=n^{-\alpha}Z[nT']
$$
(note that $nT$ is a competitor for {\rm (P1)}).
\item[(2)] To a competitor $Z'$ for {\rm (P2')} we associate the current
$$
T'[Z']:=n^{-1}T[n^\alpha Z']\,.
$$
\end{itemize}
It is easy to observe that with this rescaling the analog of Remark \ref{rmk:cone} and of Proposition \ref{prop:coeff} still hold, therefore we have the following corollary.
\begin{corollary}\label{coroll}
Problems {\rm (P1')} and {\rm (P2')} are equivalent in the sense of Theorem \ref{equiva}.
\end{corollary}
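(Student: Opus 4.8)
The plan is to transfer the equivalence of Theorem \ref{equiva} through the explicit rescaling bijections, checking that the two numerical identities \eqref{e1} and \eqref{e2} make the minima match. The key observation is that both maps $T' \mapsto Z'[T']$ and $Z' \mapsto T'[Z']$ are built by pre- and post-composing the unrescaled maps $Z[\cdot]$ and $T[\cdot]$ with multiplication by powers of $n$, so essentially all the structural work has already been done in Definition \ref{def:pass} and Theorem \ref{equiva}; what remains is bookkeeping of scaling factors.

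First I would establish the analog of Proposition \ref{prop:coeff} in the rescaled setting. Given a competitor $T'$ for (P1'), write $T = nT'$, which is an integral current with $\partial T = B_{X,Y}^{(1,1)}$, and fix a decomposition $T = \sum_{i=1}^n \tilde T_i + \sum_\ell \mathring T_\ell$ as in Theorem \ref{thm:struct_1currents}. By definition $Z'[T'] = n^{-\alpha} Z[T]$, so by \eqref{e2} and Proposition \ref{prop:coeff},
\[
\mathds M^{n,\alpha}(Z'[T']) = n^{-\alpha}\,\mathds M^{n,\alpha}(Z[T]) = n^{-\alpha}\,\mathds E^\alpha\Big(\textstyle\sum_{i=1}^n \tilde T_i\Big) = \mathds E^\alpha\Big(n^{-1}\textstyle\sum_{i=1}^n \tilde T_i\Big),
\]
where the last step uses the energy rescaling \eqref{e1}. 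Symmetrically, for a competitor $Z'$ for (P2') one sets $Z = n^\alpha Z'$ and $T'[Z'] = n^{-1} T[Z]$, and the rescaled analog of Remark \ref{rmk:cone} reads $\mathds E^\alpha(T'[Z']) \le \mathds M^{n,\alpha}(Z')$, again obtained by multiplying the unrescaled inequality by $n^{-\alpha}$ and invoking \eqref{e1} and \eqref{e2}.

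With these two ingredients in hand, the proof of equivalence runs exactly as in Theorem \ref{equiva}, with every occurrence of a mass or energy carrying a uniform factor $n^{-\alpha}$. Concretely, if $T'$ solves (P1') then $T = nT'$ solves (P1) (by the bijection and \eqref{e1}, minimizers correspond to minimizers), so by part (1) of Theorem \ref{equiva} $Z[T]$ solves (P2); multiplying by $n^{-\alpha}$ and using \eqref{e2} shows $Z'[T'] = n^{-\alpha}Z[T]$ solves (P2'), and the energy identity $\mathds E^\alpha(T') = \mathds M^{n,\alpha}(Z'[T'])$ follows by dividing $\mathds E^\alpha(T) = \mathds M^{n,\alpha}(Z[T])$ by $n^\alpha$. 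The converse direction is identical, using part (2) of Theorem \ref{equiva}.

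The only point requiring genuine attention — and the step I expect to be mildly delicate rather than truly hard — is verifying that the rescaling bijections genuinely preserve the \emph{minimizing} property in both directions, i.e. that $T \mapsto n^{-1}T$ is order-preserving on energies (immediate from \eqref{e1}, since $n^{-\alpha} > 0$) and that no competitor for (P2') fails to arise as $n^{-\alpha}Z$ for some admissible $Z$. The latter holds because the coefficient group of (P2') is exactly $n^{-\alpha}\Z^n$ with boundary $n^{-\alpha}B_{\sigma(X),Y}^{(n,\alpha)}$, so $Z \mapsto n^{-\alpha}Z$ is a bijection between admissible currents for (P2) and for (P2'); there is no loss of competitors. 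Once this is noted, the equivalence in the sense of Theorem \ref{equiva} follows verbatim, and the corollary is proved.
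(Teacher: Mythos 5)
Your proposal is correct and follows the same route as the paper: the paper's own proof simply observes that the rescaling maps put the competitors (hence the solutions) of (P1') and (P2') in bijection with those of (P1) and (P2), and that the equivalence then follows from Theorem \ref{equiva} together with \eqref{e1} and \eqref{e2}. Your write-up just makes explicit the bookkeeping (the common factor $n^{-\alpha}$ on both functionals and the surjectivity of the rescaling onto the competitor classes) that the paper leaves implicit.
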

\begin{proof}
By construction, the solutions of the problem (P1') are in bijection with the solutions of the problem (P1). Analogously the solutions of the problem (P2') are in bijection with the solutions of the problem (P2).
Thus the equivalence follows from Theorem \ref{equiva} and from \eqref{e1} and \eqref{e2}.
\end{proof}

\subsection{Convex problems and existence of calibrations}\label{rmk:conv}
Even if we have found a description of the Gilbert-Steiner problem which consists in a minimization of a convex functional, we cannot really claim that we are solving a ``convex problem'' (in which case a 
powerful machinery for the computation of minima would be available). Indeed our class of competitors is not a convex set. While for the classical formulation of the Gilbert-Steiner problem (see footnote 3) one can safely extend 
the minimization problem to a convex and compact class of objects (being the energy infinite for non-rectifiable currents), this is not the case for problem (P2). Moreover, in (P2) we cannot guarantee that there is no gap of mass between the minimizers among normal currents and minimizers among integral currents. An interesting occurrence of this gap phenomenon in a 
non-Euclidean setting is given in \cite{AnMa}. We conjecture that in the Euclidean setting there is no such gap. The validity of this conjecture would imply that 
the Gilbert-Steiner problem can be viewed as a convex problem and moreover the existence of (a sort of) calibration would be a necessary and sufficient condition for the minimality (see \S 4 of \cite{AnMa} for more details).

\section{A minimizer with infinitely many branching points}\label{sec:tree}
\subsection{Currents in metric spaces}
In this section we prove the optimality of a certain irrigation network having an infinite boundary datum, as mentioned in \S\ref{subsec:resc}. The main tools to prove
its optimality have already been introduced in \S \ref{sec:not} and \S \ref{sec:equiv}, more precisely in Theorem \ref{thm_calib} (the existence of a calibration is a sufficient condition for minimality) and in Corollary \ref{coroll} (the equivalence of problems (P1') and (P2')). Nevertheless, since the ambient space for this
irrigation
network is the separable Hilbert space\footnote{We choose this setting instead of the more natural Euclidean space, because the possibility to have infinitely many mutually orthogonal direction allows us to find a very simple calibration for the problem we will introduce. } $\ell^2$, a short digression on currents in metric spaces is needed. In order to keep the discussion here as incisive as possible, we recall only those facts
that are strictly unavoidable to define a certain irrigation problem and to prove the optimality of our configuration. The reader is referred to \cite{Am_Kirch} for the theory of
currents in a metric space and for all the relevant definitions. Throughout this section, we will assume $\alpha=1/2$.

Let $T$ be a rectifiable 1-current in $\ell^2$ (see \S 4 in \cite{Am_Kirch}) and let $\|T\|$ be the associated \emph{mass} (in the sense of Definition 2.6 of \cite{Am_Kirch}). In particular, $T$ is supported on a countably $\cH^1$-rectifiable
set $\Sigma$ and $\|T\|=\theta\cH^1\res\Sigma$ for some non-negative $\theta\in L^1(\Sigma,\R)$. We call the \emph{extended Gilbert-Steiner energy}\footnote{We denote the extended Gilbert-Steiner energy by $\mathds E$, omitting $\alpha$, because $\alpha=1/2$ throughout all the current section.} of $T$ the quantity
\begin{equation}\label{ggse}
\mathds{E}(T):=\int_\Sigma \sqrt{\theta(x)}\,{\rm{d}}\cH^1(x)\,,
\end{equation}
if the integral makes sense, or $+\infty$ otherwise. One may notice immediately that the extended Gilbert-Steiner energy is still subadditive, i.e.,
\[
\mathds{E}(T_1+T_2)\le\mathds{E}(T_1)+\mathds{E}(T_2)\,.
\]
Since the projection on a finite dimensional space is going to be a fundamental simplification in the handling of our problem, we recall the following proposition, which just relies on the concavity of the square root. An analogous statement for the mass is given in equation (2.4) of \cite{Am_Kirch}.
\begin{proposition}\label{prop:proj}
Let $\pi:\ell^2\to V$ be the orthogonal projection of $\ell^2$ onto a subspace $V$. Then, for every rectifiable 1-current $T$ in $\ell^2$, there holds\footnote{For the definition of the pushforward $F_\sharp T$ of a current $T$ through a map $F$, see Definition 2.4 of \cite{Am_Kirch}.} $\mathds{E}(\pi_\sharp T)\le \mathds{E}(T)$.
\end{proposition}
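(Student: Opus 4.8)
The plan is to imitate the proof of the analogous mass inequality (equation (2.4) of \cite{Am_Kirch}), replacing the linear control by one tailored to the concave integrand $\sqrt{\,\cdot\,}$. We may assume $\mathds E(T)<+\infty$, otherwise there is nothing to prove. Since $\pi$ is a linear $1$-Lipschitz map, at $\cH^1$-a.e.\ $x\in\Sigma$ the tangential Jacobian satisfies $J\pi(x):=|\pi(\tau(x))|\le 1$, where $\tau$ is the orienting unit tangent of $\Sigma$. The pushforward $\pi_\sharp T$ is again a rectifiable $1$-current, supported up to $\cH^1$-null sets on $\Sigma':=\pi(\Sigma)$, with density $\tilde\theta$ governed by the algebraic superposition of the projected sheets: for $\cH^1$-a.e.\ $y\in\Sigma'$,
\[
\tilde\theta(y)\,\tau'(y)=\sum_{x\in\pi^{-1}(y)\cap\Sigma}\theta(x)\,\pi(\tau(x)),
\]
where $\tau'(y)$ is the orienting unit tangent of $\Sigma'$ and the (a.e.\ countable) sum effectively ranges over the preimages with $J\pi(x)>0$.

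Taking norms above and using the triangle inequality gives $\tilde\theta(y)\le\sum_x\theta(x)\,J\pi(x)$. The concavity — hence subadditivity — of the square root, combined with $J\pi\le 1$, then yields for $\cH^1$-a.e.\ $y$:
\[
\sqrt{\tilde\theta(y)}\le\Big(\sum_x\theta(x)\,J\pi(x)\Big)^{1/2}\le\sum_x\sqrt{\theta(x)}\,\sqrt{J\pi(x)}\le\sum_x\frac{\sqrt{\theta(x)}}{J\pi(x)},
\]
the final step using $\sqrt{J\pi(x)}\le J\pi(x)^{-1}$, valid exactly because $0<J\pi(x)\le 1$. Integrating over $\Sigma'$ and applying the area formula to $\pi|_\Sigma$ with the weight $f(x)=\sqrt{\theta(x)}/J\pi(x)$ on $\{J\pi>0\}$ gives
\[
\mathds E(\pi_\sharp T)=\int_{\Sigma'}\sqrt{\tilde\theta}\,\d\cH^1\le\int_{\{J\pi>0\}}\sqrt{\theta}\,\d\cH^1\le\int_\Sigma\sqrt{\theta}\,\d\cH^1=\mathds E(T).
\]
The collapse set $\{J\pi=0\}$ is harmless: its image is $\cH^1$-null and it contributes nothing to the superposition defining $\tilde\theta$, so discarding it only lowers the bound.

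The geometric-measure-theoretic bookkeeping — rectifiability of $\pi_\sharp T$ with the stated superposition density, $\cH^1$-a.e.\ countability of the preimages, and the availability of the area formula in the Hilbert-space setting — I would simply quote from \cite{Am_Kirch}. The genuinely new and essential ingredient is the elementary inequality chain above, in which subadditivity of $\sqrt{\,\cdot\,}$ absorbs the overlapping of several sheets while $J\pi\le 1$ controls the stretching, the two effects cooperating rather than competing. The only delicate point to get right is the isolation of $\{J\pi=0\}$, so that the weight $1/J\pi$ stays finite precisely where it is used.
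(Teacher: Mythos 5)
The paper itself contains no proof of this proposition: it only remarks that the statement ``just relies on the concavity of the square root'' and points to the analogous mass inequality (2.4) of \cite{Am_Kirch}. So you are supplying an omitted argument, and your overall strategy --- area formula for $\pi|_\Sigma$, subadditivity of $t\mapsto\sqrt t$, and the control $J\pi\le1$ --- is exactly the intended one. There is, however, one genuine error in your write-up: the superposition identity for the pushforward density. The correct formula is
\[
\tilde\theta(y)\,\tau'(y)=\sum_{x\in\pi^{-1}(y)\cap\Sigma}\theta(x)\,\frac{\pi(\tau(x))}{|\pi(\tau(x))|}\,,
\]
i.e.\ the projected tangents must be renormalized to unit length; the Jacobian factor $J\pi(x)=|\pi(\tau(x))|$ is already consumed by the area formula when one converts the integral over $\Sigma$ into an integral over $\Sigma'$ of a sum over fibres, and it must not appear a second time in the fibrewise density. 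Consequently your pointwise bound $\tilde\theta(y)\le\sum_x\theta(x)\,J\pi(x)$ is false: for a single segment of unit multiplicity making a $45^\circ$ angle with the line onto which you project, the image current has $\tilde\theta\equiv1$ while $\theta\,J\pi\equiv1/\sqrt2$.

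The error is fortunately self-correcting, because your chain of inequalities ends at a quantity that is a valid upper bound anyway. The true estimate $\tilde\theta(y)\le\sum_x\theta(x)$ together with subadditivity of the square root gives
\[
\sqrt{\tilde\theta(y)}\;\le\;\sum_x\sqrt{\theta(x)}\;\le\;\sum_x\frac{\sqrt{\theta(x)}}{J\pi(x)}\,,
\]
the last step using only $0<J\pi\le1$, and this is exactly the weight $f=\sqrt\theta/J\pi$ to which you apply the area formula; the conclusion $\mathds E(\pi_\sharp T)\le\int_{\{J\pi>0\}}\sqrt\theta\,\d\cH^1\le\mathds E(T)$ then survives verbatim. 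So the repair consists of deleting the two middle links of your displayed chain (the ones carrying the spurious $\sqrt{J\pi}$) and replacing the first link by the corrected superposition bound. Your isolation of the collapse set $\{J\pi=0\}$ and the reduction of the bookkeeping to \cite{Am_Kirch} are fine.
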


\subsection{Construction of the irrigation tree}\label{subs:constr}

Let $(e_i)_{i\in\N}$ be the standard orthonormal basis of $\ell^2$. Roughly speaking, we are going to define a rectifiable $1$-dimensional current $T$ in $\ell^2$ (with coefficients in $\Q$) in the following way: we start from a ``trunk'', which is the oriented segment from the origin $O$ to $e_1\in\ell^2$, with unit multiplicity, then, for every $n\in\N$, we are going to build a family of $2^n$ ``branches'', that is, $2^n$ currents supported on segments connected to the main ``tree'', with suitable length, orientation and multiplicity in $\Q$. The irrigation tree will be the sum, up to infinity, of these currents.

To begin with, we deal with the orientations of the segments. If it is not obvious from the context, we always denote by $\sigma(p,q)$ a segment in $\ell^2$ from $p$ to $q$ and by $x_{\sigma(p,q)}$ the (unit) orientation of $\sigma(p,q)$ from $p$ to $q$. We now establish the directions $y(x),z(x)$ according to which we will assign the orientation of the segments of a new generation. For every $x\in\ell^2$ of the form $x=\sum_{i=1}^l a_ie_i$, with $a_l\neq 0$, we put
\[
y(x):=\frac{\sqrt 2}{2}\sum_{i=1}^l a_i(e_i+e_{l+i})
\] 
and
\[
z(x):=\frac{\sqrt 2}{2}\sum_{i=1}^l a_i(e_i-e_{l+i})\,.
\]
Notice that $y(x)$ and $z(x)$ are orthogonal and $y(x)+z(x)$ is parallel to $x$. Moreover, if we start with a set of  mutually orthonormal directions $\{x_1,\ldots,x_m\}$, then $\{y(x_1),z(x_1),\ldots,y(x_m),z(x_m)\}$ are mutually orthonormal, too.


We consider the following iterative construction.
\begin{enumerate}
\item[$\circ$] We set 
\[
T_0=\llbracket\sigma(O,e_1),x_0,1\rrbracket\,,
\] 
where $\sigma(O,e_1)$ is the segment from the origin to $e_1$ and $x_0=e_1$ is the outgoing orientation of $\sigma(O,e_1)$. Moreover, we define $E_0:=\{\sigma(O,e_1)\}$.
\item[$\circ$] For every line segment $\sigma(p,q)\in E_{n-1}$ with orientation $x=x_{\sigma(p,q)}$, we put in the set $E_n$ the pair of segments 
$$\sigma(q,q+4^{-n}y(x))\;\;\; {\rm and}\;\;\; \sigma(q,q+4^{-n}z(x)).$$ 
Finally, we define
\[
T_n:=\sum_{i\le n}\sum_{\sigma\in E_i}\llbracket\sigma,x_\sigma,2^{-i}\rrbracket\,.
\]
Notice that\footnote{Here and in the following, $\mathds M(T)$ denotes the total mass of the mass measure $\|T\|$.} $\mathds M(\partial T_n)=2$ for every $n\in\N$.
\end{enumerate}
If we call $$E^n:=\bigcup_{\sigma\in E_n}\sigma,$$ 
then the support of the irrigation tree is defined as the limit
\[
E:=\bigcup_{n\in\N}E^n\,.
\]
Notice that the segments are growing in orthogonal directions, hence $\cH^1$-a.e. $p\in E$ belongs to a unique segment $\sigma(p)\in E_{n(p)}$ and we can define an orientation $\tau:E\to\ell^2$ setting $\tau(p)=x_{\sigma(p)}$. Analogously, we can define a multiplicity $m:E\to\Q$ setting $m(p)=2^{-n}$ for every $p\in E^n$. The irrigation tree $T$ is defined as 
\[
T:=\llbracket E,\tau,m\rrbracket\,.
\]

The truncated currents $T_n$ play a fundamental role in the proof of optimality of the irrigation tree $T$, because they are a good approximation for $T$ (as we underline below in Remark \ref{rmk:approx_tn}) and they are essentially embedded in a finite dimensional space (indeed, ${\rm supp}\,T_n\subset{\rm span}\{e_1,\ldots,e_{2^n}\}$). Since the segments in $E$ are essentially disjoint, we can also write $T_n=T\res\left(\bigcup_{i\le n} E^n\right):=\llbracket \bigcup_{i\le n} E^n,\tau,m\rrbracket$.


\begin{remark}\label{rmk:approx_tn}{\rm By construction, the following facts hold.
\begin{itemize}
\item[(i)] $\mathds E(T-T_n)=\sum_{j>n}2^j2^{-\frac j2}4^{-j}=\sum_{j>n}2^{-\frac 32 j}$.
\item[(ii)] $\mathds M(T-T_n)=\sum_{j>n}2^j2^{-j}4^{-j}=\sum_{j>n} 2^{-2j}$.
\item[(iii)] If $\pi_n:\ell^2\to{\rm span}\{e_1,\ldots,e_{2^n}\}$ are the orthogonal projections and if $\{q_1,\ldots,q_{2^n}\}$ are the second extreme points of the segments in $E_n$, then $({\pi_n})\sharp(T_n)$ is
a candidate for the minimization problem (P1') in \ref{subsec:resc}, where
\[
X=(0,\ldots,0),\quad Y=(q_1,\ldots,q_{2^n})\,.
\]
Indeed $2^n({\pi_n})_\sharp T_n$ is an integral current.
\end{itemize}
}\end{remark}

We are now ready to prove the optimality of $T$.
\begin{theorem}\label{thm_tree}
The rectifiable $1$-current $T$ in $\ell^2$ minimizes the generalized Gilbert-Steiner energy \eqref{ggse} among all rectifiable currents in $\ell^2$ with boundary $\partial T$.
\end{theorem}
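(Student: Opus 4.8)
The plan is to approximate $T$ by its finite truncations $T_n$, to modify any competitor so that it shares the boundary $\partial T_n$, to push everything into the finite dimensional space $V_n:=\mathrm{span}\{e_1,\dots,e_{2^n}\}$, and there to certify optimality by a calibration. Let $S$ be a rectifiable $1$-current in $\ell^2$ with $\partial S=\partial T$ and $\mathds E(S)<+\infty$. For each $n$ I set $S_n:=S-(T-T_n)$; by linearity of the boundary $\partial S_n=\partial T_n$, and by subadditivity of the energy together with $\mathds E(-(T-T_n))=\mathds E(T-T_n)$ one gets $\mathds E(S_n)\le\mathds E(S)+\mathds E(T-T_n)$. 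Since $\partial T_n$ is supported in $V_n$, the pushforward $(\pi_n)_\sharp S_n$ is a rectifiable $1$-current in $V_n$ with $\partial (\pi_n)_\sharp S_n=\partial T_n$, and Proposition \ref{prop:proj} gives $\mathds E((\pi_n)_\sharp S_n)\le\mathds E(S_n)$; moreover $(\pi_n)_\sharp T_n=T_n$ because $\mathrm{supp}\,T_n\subset V_n$.

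The heart of the proof is to show that $T_n$ minimizes $\mathds E$ among all rectifiable currents in $V_n$ with boundary $\partial T_n$. To this end I pass to currents with coefficients in $(\R^{2^n},\|\cdot\|_{1/2})$, assigning to the $k$-th leaf the generator $g_k$: the tree $T_n$ becomes the group current $Z_n$ in which each segment carries the sum of the $g_k$ over the leaves it feeds (rescaled as in Corollary \ref{coroll}), and on each segment the $\|\cdot\|_{1/2}$ of this multiplicity equals $\sqrt\theta$ of the corresponding piece of $T_n$, so that $\mathds M^{(2^n,1/2)}(Z_n)=\mathds E(T_n)$. Given any real competitor $T'$ in $V_n$ with $\partial T'=\partial T_n$, I decompose it into its flows towards the single source and attach the generators $g_k$ to obtain a group current $Z'$ with $\partial Z'=\partial Z_n$; since the source is unique, the flow reaching each leaf is bounded by the mass that leaf absorbs, and this bound is exactly what makes the (rescaled) group mass dominated by the classical energy, so that $\mathds M^{(2^n,1/2)}(Z')\le\mathds E(T')$.

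It then suffices to prove that $Z_n$ is mass minimizing, which I do by exhibiting a constant calibration $\omega_n$ in the sense of Definition \ref{def:calib}. Representing $\omega_n$ through its leaf-components as $(\tau,g_k)\mapsto b_k\cdot\tau$ for suitable vectors $b_k\in V_n$, condition (i) requires $\sum_{k\in K_\sigma}b_k\cdot x_\sigma=\|\sum_{k\in K_\sigma}g_k\|_{1/2}=\sqrt{|K_\sigma|}$ on every segment $\sigma$ (with $K_\sigma$ its set of downstream leaves), which I arrange using the equal splitting at each node and the branching identity $x=\tfrac1{\sqrt2}(y(x)+z(x))$ with $y(x)\perp z(x)$, in complete analogy with the two finite examples computed above; condition (ii) is automatic since $\omega_n$ is constant. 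By Theorem \ref{thm_calib} and Remark \ref{calib_rect}, $\omega_n$ forces $\mathds M^{(2^n,1/2)}(Z_n)\le\mathds M^{(2^n,1/2)}(Z')$, whence $\mathds E(T_n)=\mathds M^{(2^n,1/2)}(Z_n)\le\mathds M^{(2^n,1/2)}(Z')\le\mathds E(T')$, as wanted.

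Combining the two chains, $\mathds E(T_n)\le\mathds E((\pi_n)_\sharp S_n)\le\mathds E(S)+\mathds E(T-T_n)$, while subadditivity gives $\mathds E(T)\le\mathds E(T_n)+\mathds E(T-T_n)$; therefore
\[
\mathds E(T)\le\mathds E(S)+2\,\mathds E(T-T_n).
\]
By Remark \ref{rmk:approx_tn}(i) one has $\mathds E(T-T_n)=\sum_{j>n}2^{-\frac32 j}\to0$, and letting $n\to\infty$ yields $\mathds E(T)\le\mathds E(S)$, which is the claim. I expect the main obstacle to be the construction of $\omega_n$, and precisely the verification of the comass bound (iii): since $\langle\omega_n;\tau,h\rangle=\sum_k h_k\,(b_k\cdot\tau)$, by Cauchy--Schwarz condition (iii) amounts to $\sum_k(b_k\cdot\tau)^2\le1$ for every unit vector $\tau\in V_n$, and it is here that the mutual orthogonality of the directions $\{y(x),z(x)\}$ of one generation --- built into the definition of the tree --- is indispensable.
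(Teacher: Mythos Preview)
Your overall architecture matches the paper's: truncate to $T_n$, adjust the competitor to $S_n$, project via $\pi_n$, and certify optimality of $T_n$ by a calibration in the group-coefficient setting. The calibration you sketch is also essentially the paper's: with $b_k=\pi_n(x_k)$ (the leaf directions), the form is the identity on $V_n$, condition (i) follows from the recursion $y(x)+z(x)\parallel x$, and (iii) from the orthonormality of $(\pi_n(x_k))_k$.

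The gap is in your passage from an \emph{arbitrary real-multiplicity} competitor $T'=(\pi_n)_\sharp S_n$ to a group-valued current $Z'$ with $\mathds M^{(2^n,1/2)}(Z')\le\mathds E(T')$. You write that you ``decompose it into its flows towards the single source'' and that the flow to each leaf is bounded by the mass the leaf absorbs. But the structure theorem you have available (Theorem~\ref{thm:struct_1currents}) and the construction $Z[\,\cdot\,]$ of Definition~\ref{def:pass} are stated only for \emph{integral} currents; for a general rectifiable $T'$ with real multiplicities there is no such finite path decomposition in the paper's toolkit. The authors flag exactly this obstruction (they note explicitly that there is no canonical way to associate to $(\pi_n)_\sharp S_n$ a group current whose mass is controlled by its energy), and they circumvent it by an extra approximation step: they replace $(\pi_n)_\sharp S_n$ by a polyhedral current $\hat S=\tfrac{1}{2^nN}R$ with $R$ integral and $\mathds E(\hat S)\le\mathds E((\pi_n)_\sharp S_n)+\delta$ (Xia, Prop.~4.4), then apply the integral decomposition to $R$. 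The pointwise bound you invoke becomes the discrete inequality $c_{i,j}\le N$, which gives $\|b_i\|=\sqrt{\sum_j c_{i,j}^2}\le\sqrt{N\sum_j c_{i,j}}=\sqrt{Na_i}$ and hence $\mathds M^{(2^n,1/2)}(\hat Z)\le\mathds E(\hat S)$. Without this rational/polyhedral approximation (or an external result such as Smirnov's superposition theorem, which you do not invoke and the paper does not use), your inequality $\mathds M^{(2^n,1/2)}(Z')\le\mathds E(T')$ is unjustified.
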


\begin{proof}
We split the proof into several steps.

\noindent{\bf Step 1: general strategy.} Let $S$ be an admissible competitor, i.e., $S$ is a rectifiable $1$-current in $\ell^2$ with $\partial S=\partial T$. 

We want to reduce the problem to an energy minimization problem for rectifiable $1$-currents in a finite dimensional space and then exploit the tools developed in the previous sections. For every $n\in\N$, let $S_n:=S-T+T_n$ and notice that $\partial S_n=\partial T_n$. By Proposition \ref{prop:proj}, we have that
\[
\mathds E(({\pi_n})_\sharp S_n)\le\mathds E(S_n)\quad\forall\,n\in\N\,.
\]

Fix $\varepsilon>0$. Since, by
subadditivity of $\mathds E$, we have that
\[
\mathds E(S_n)\le\mathds E(S)+\mathds E(T_n-T)\quad\forall\,n\in\N
\]
and the energy $\mathds E(T_n-T)$ is going to $0$ (see Remark \ref{rmk:approx_tn}), then there exists $n(\varepsilon)\in\N$ such that $\mathds E(T_{n(\varepsilon)}-T)\le\varepsilon$, thus $\mathds E(S)\ge \mathds E(S_{n(\varepsilon)})-\varepsilon$.
If we can prove that
\begin{equation}\label{claim1}
\mathds E(T_n)\le\mathds E(({\pi_n})_\sharp S_n)
\end{equation}
for every $n\in\N$, then we have that
\begin{multline*}
\mathds E(T)\le\mathds E(T_{n(\varepsilon)})+\mathds E(T_{n(\varepsilon)}-T)\le\mathds E(T_{n(\varepsilon)})+\varepsilon\\
\le\mathds E(({\pi_{n(\varepsilon)}})_\sharp S_{n(\varepsilon)})+\varepsilon\le\mathds E(S_{n(\varepsilon)})+\varepsilon\le\mathds E(S)+2\varepsilon\,,
\end{multline*}
which proves the theorem, since $\varepsilon$ can be arbitrarily small.

\noindent{\bf Step 2: reduction to problem (P1').} We have already noticed that $({\pi_n})_\sharp T_n$ is an admissible candidate for the problem (P1'). The issue is that, in general, $({\pi_n})_\sharp S_n$ is not an admissible competitor and we cannot immediately look for a calibration for the current $Z'[({\pi_n})_\sharp T_n]$\footnote{The definition of $Z'[({\pi_n})_\sharp T_n]$ is given in \S \ref{subsec:resc}. The role of the basis $(g_1,\ldots,g_{2^n})$ here is played by $(\pi_n(x_1),\ldots, \pi_n(x_{2^n}))$, where $x_1,\ldots,x_{2^n}$ are the (mutually orthogonal) orientations of the segments in the family $E_n$}. Indeed $({\pi_n})_\sharp S_n$ is a rectifiable current, with admissible boundary datum, but its coefficients are in $\R$ and not
necessarily in $2^{-n}\Z$, as required in (P1')\footnote{In particular, we do not have a canonical way to associate to $({\pi_n})_\sharp S_n$ a current with coefficients in $\R^{2^n}$ whose mass is less or equal to the energy of $({\pi_n})_\sharp S_n$ (see Proposition \ref{prop:coeff}).}.

In order to overcome this issue, we have to perform a second approximation. Fix $\delta>0$. We associate to $({\pi_n})_\sharp S_n$ a polyhedral current
$\hat S=\frac{1}{2^nN} R$, where $N\in\N$ and $R$ has integer multiplicity. We can choose $\hat S$ in such a way that the boundary is unchanged, i.e.,
\[
\partial\hat S=\partial({\pi_n})_\sharp S_n=\partial(\pi_n)_\sharp T_n\,,
\] 
and
\begin{equation}\label{ap}
\mathds E(\hat S)\le\mathds E\left(({\pi_n})_\sharp S_n\right)+\delta\,.
\end{equation}
The existence of $\hat S$ is a consequence of Proposition 4.4 of \cite{Xia2}.

\noindent {\bf Step 3: construction of $\hat Z$.} 

Since we want to move the problem of energy minimization in the setting of (rational multiples of) rectifiable currents with coefficients in $\Z^{2^n}$, we associate to $\hat S$ a current $\hat Z$ in $\R^{2^n}$ with coefficients in $\R^{2^n}$. We can choose $\hat Z$ in such a way that the boundary is 
\[
\partial\hat Z=\partial Z'[(\pi_n)_\sharp T_n]
\]
and
\begin{equation}\label{ineq2}
\mathds M^{(2^n,1/2)}(\hat Z)\le\mathds E(\hat S)\,.
\end{equation}
The construction of $\hat Z$, starting from $\hat S$, is analogous to that presented in Definition \ref{def:pass}. Let $x_1,\ldots, x_{2^n}$ be the (mutually orthogonal) orientations of the segments in the 
family $E_n$. Let us write the polyhedral integral 1-current $R=2^nN\hat S$ as
\[
R=\sum_{i\in I}\llbracket\sigma_i,\tau_i,a_i\rrbracket\,,\] 
where $\sigma_i$ is an oriented segment with orientation $\tau_i$ and $a_i\in\left\{1,\ldots,2^nN\right\}$. Decomposing\footnote{Without loss of generality we can assume $R$ to be acyclic, i.e., $\mathring{R}_\ell=0$ for every $\ell\in\N$.} $R$ as in \eqref{decomp_1current}, one may notice that the current $Z[R]$ (ac\-cor\-ding to the basis $(\pi_n(x_1),\ldots, \pi_n(x_{2^n}))$) has the form 
\[
Z[R]=\sum_{i\in I}\llbracket\sigma_i,\tau_i,b_i\rrbracket\,,
\]
where
\begin{enumerate}
\item[$\circ$] $b_i=c_{i,1} \pi_n(x_1)+\ldots+c_{i,2^n}\pi_n(x_{2^n})$;
\item[$\circ$] $c_{i,j}\in\N\cup\{0\}$ and $c_{i,j}\le N$ for every $j=1,\ldots,2^n$\footnote{Roughly speaking, $c_{i,j}$ is the portion of the multiplicity $a_i$ of $\sigma_i$, which is due to all paths ending in the second extreme of the segment of $E_n$ having orientation $x_j$};
\item[$\circ$] $c_{i,1}+\ldots+c_{i,2^n}=a_i$.
\end{enumerate}

We can compute
\[
\|b_i\|=\sqrt{c_{i,1}^2+\ldots+c_{i,2^n}^2}\le \sqrt{c_{i,1}N+\ldots+c_{i,2^n}N}=\sqrt{N a_i}\,.
\]
Denote $\hat Z:=2^{-n/2}N^{-1}Z[R]$. We have

\begin{eqnarray*}
\partial\hat Z &=&\partial(2^{-n/2}N^{-1}Z[R])=2^{-n/2}N^{-1}\partial Z[N2^n\hat S]\\
&=& 2^{-n/2}N^{-1}\partial Z[N2^n(\pi_n)_\sharp T_n]=\partial(2^{-n/2} Z[2^n(\pi_n)_\sharp T_n])=\partial Z'[(\pi_n)_\sharp T_n]\,.
\end{eqnarray*}
Moreover 
\begin{eqnarray*}
\mathds M^{(2^n,1/2)}(\hat Z)&=&2^{-n/2}N^{-1}\mathds M^{(2^n,1/2)}(Z[R])=2^{-n/2}N^{-1}\sum_{i\in I}\|b_i\|\Haus{1}(\sigma_i)\\
&\le & 2^{-n/2}N^{-1/2}\sum_{i\in I}\sqrt{a_i}\Haus{1}(\sigma_i)=2^{-n/2}N^{-1/2}\mathds E(R)\\
&=&\mathds E(2^{-n}N^{-1}R)=\mathds E(\hat S)\,.
\end{eqnarray*}

\noindent {\bf Step 4: proof of the minimality of $Z'[({\pi_n})_\sharp T_n]$.} 
Even if, technically speaking, $\hat Z$ does not belong to the set of competitors for (P2'), we can still prove that 
\begin{equation}\label{ineq3}
\mathds M^{(2^n,1/2)}\left(Z'[({\pi_n})_\sharp T_n]\right)\le\mathds M^{(2^n,1/2)}(\hat Z)
\end{equation}
via the calibration technique (see Remark \ref{calib_rect}).

Consider the basis of $\R^{2^n}$ $\left(\pi_n(x_1),\ldots,\pi_n(x_{2^n})\right)$. The current $Z'\left[({\pi_n})_\sharp T_n\right]$ has the following property: the multiplicity on each line
segment of its support is a multiple of the orientation of the segment itself\footnote{This is due to the fact that the sum of $y(x)$ and $z(x)$, defined in \S \ref{subs:constr}, is parallel to $x$.} (the orientation being defined above). This implies that, if we denote by $\omega:\R^{2^n}\to \R^{2^n}$ the identity map, then $\omega$ can
be seen as a $\R^{2^n}$-valued (constant) differential $1$-form, which in particular is a calibration for $Z'\left[({\pi_n})_\sharp T_n\right]$: the verification of conditions (i), (ii) and (iii) of Definition \ref{def:calib} is elementary, as in Example \ref{ex:calib}. Theorem \ref{thm_calib} and Remark \ref{calib_rect} allow us to conclude that \eqref{ineq3} holds.

\noindent {\bf Step 5: conclusion.} 
Eventually we can prove the claim \eqref{claim1}, indeed
\[
\mathds E(({\pi_n})_\sharp S_n)+\delta\stackrel{\eqref{ap}}{\ge}\mathds E(\hat S)\stackrel{\eqref{ineq2}}{\ge}\mathds M^{(2^n,1/2)}(\hat Z)\stackrel{\eqref{ineq3}}{\ge}\mathds M^{(2^n,1/2)}\left(Z'[({\pi_n})_\sharp (T_n)]\right)
\]
and, from the arbitrariness of $\delta$, from Corollary \ref{coroll}, together with the fact that ${\rm supp}(T_n)$ is contained in ${\rm span}\{e_1,\ldots,e_{2^n}\}$, we finally get that
\[
\mathds E(({\pi_n})_\sharp S_n)\ge\mathds M^{(2^n,1/2)}\left(Z'[({\pi_n})_\sharp (T_n)]\right)\stackrel{\ref{coroll}}{=}\mathds E\left(({\pi_n})_\sharp T_n\right)=\mathds E(T_n)\,.
\]
\end{proof}
\begin{remark}
In \cite{De_Pauw_Hardt} the authors combine the notion of currents in metric spaces of \cite{Am_Kirch} and currents with coefficients in a group of \cite{White2}. In this setting we can give a meaning to the problem
 of minimizing the mass in a class of currents in $\ell^2$ with coefficients in $\ell^2$ with given boundary. It is possible to associate to the current $T$, defined in \S \ref{subs:constr}, a current $Z$ in $\ell^2$ with 
coefficients in $\ell^2$, in a way which is similar to that explained in Definition \ref{def:pass}. Via a simple calibration argument (the calibration being again the ``identity'' with respect to suitable 
coordinates) one can prove that $Z$ minimizes the mass among all currents in $\ell^2$ with coefficients in $\ell^2$, with boundary $\partial Z$.
\end{remark}
The latter remark suggests the possibility to describe, in general, the continuous version of the Gilbert-Steiner problem as a mass minimization problem among currents in the Euclidean space, with 
coefficients in $\ell^2$. The advantage of this formulation would be, for example, the availability of well known numerical methods for the minimization of a convex functional.

\bibliographystyle{acm}
\bibliography{Bib_A}

\end{document}